\documentclass[11pt]{article}
\ifdefined\pdfmapfile
  \pdfmapfile{+cm.map}
  \pdfmapfile{+cmextra.map}
  \pdfmapfile{+euler.map}
  \pdfmapfile{+symbols.map}
\fi

\usepackage[a4paper,margin=1in]{geometry}
\usepackage{amsmath,amssymb,amsthm,mathtools}
\usepackage{bm}
\usepackage{enumitem}
\usepackage{xcolor}
\usepackage{hyperref}
\usepackage[nameinlink,capitalize,noabbrev]{cleveref}
\hypersetup{
  colorlinks=true,
  linkcolor=blue!55!black,
  citecolor=blue!55!black,
  urlcolor=blue!55!black,
  pdftitle={Uniform-in-Time Smoluchowski--Kramers Approximation in Total Variation for Fractional Differential Equations with Multiplicative Noise},
  pdfauthor={Anonymous},
  pdfsubject={Long-time small-mass approximation for fractional kinetic equations},
  pdfkeywords={fractional Brownian motion, Smoluchowski--Kramers approximation, multiplicative noise, total variation, Malliavin calculus}
}

\newtheorem{theorem}{Theorem}[section]
\newtheorem{proposition}[theorem]{Proposition}
\newtheorem{lemma}[theorem]{Lemma}

\newtheorem{assumption}[theorem]{Assumption}
\newtheorem{conjecture}[theorem]{Conjecture}
\theoremstyle{definition}

\newtheorem{example}[theorem]{Example}
\theoremstyle{remark}
\newtheorem{remark}[theorem]{Remark}
\crefname{assumption}{assumption}{assumptions}
\Crefname{assumption}{Assumption}{Assumptions}
\crefname{conjecture}{conjecture}{conjectures}
\Crefname{conjecture}{Conjecture}{Conjectures}

\newcommand{\R}{\mathbb{R}}
\newcommand{\E}{\mathbb{E}}
\newcommand{\Pp}{\mathbb{P}}
\newcommand{\1}{\mathbf{1}}
\newcommand{\cH}{\mathfrak{H}}
\newcommand{\D}{\mathrm{D}}
\newcommand{\TV}{\mathrm{TV}}
\newcommand{\Law}{\mathcal{L}}
\newcommand{\norm}[1]{\left\lVert #1\right\rVert}
\newcommand{\abs}[1]{\left\lvert #1\right\rvert}
\newcommand{\ip}[2]{\left\langle #1,#2\right\rangle}

\title{Uniform-in-Time Smoluchowski--Kramers Approximation\\
in Total Variation for Fractional SDEs}
\author
{Qian Yu, Jiaxin Zha}
\date{\today}

\begin{document}
\maketitle

\begin{abstract}
Let $B^H$ be a one-dimensional fractional Brownian motion with Hurst index
$H\in(1/2,1)$.  We study the small-mass limit of the kinetic equation
\[
  dX_t^\mu=Y_t^\mu\,dt,\qquad
  \mu\,dY_t^\mu=b(X_t^\mu)\,dt-Y_t^\mu\,dt
       +\sigma(X_t^\mu)\,dB_t^H,
\]
where the noise coefficient is state dependent and uniformly nondegenerate.
The limiting equation is the Young differential equation
\[
  dX_t=b(X_t)\,dt+\sigma(X_t)\,dB_t^H.
\]
Under uniform ellipticity and strict dissipativity of
the transformed drift, we prove that, for every $t_0>0$ and every
$\rho<2H-1$,
\[
  \sup_{t\ge t_0}d_{\TV}\bigl(\Law(X_t^\mu),\Law(X_t)\bigr)
  \le C_{t_0,\rho}\mu^\rho .
\]
The estimate is uniform on the entire half-line. We also construct stationary solutions on a two-sided
fractional-noise space and obtain convergence of their one-time marginals in
total variation.  The exponent $2H-1$ is identified as the natural endpoint:
it is generated by the quadratic velocity term after the Lamperti transform,
and it degenerates as $H\downarrow1/2$.  A nonconstant uniformly elliptic
example is included.
\end{abstract}

\noindent\textbf{Keywords.}
Smoluchowski--Kramers approximation; Fractional Brownian motion;
Multiplicative noise; Total variation distance; Malliavin calculus.

\noindent\textbf{2020 Mathematics Subject Classification.}
60H10, 60G22, 60H07.

\section{Introduction}

The Smoluchowski--Kramers approximation is the mathematical passage from an
inertial Langevin equation to an overdamped first-order equation when the mass
of the particle tends to zero.  For a particle with position $X^\mu$ and
velocity $Y^\mu$, the classical model has the form
\[
  dX_t^\mu=Y_t^\mu\,dt,
  \qquad
  \mu\,dY_t^\mu=b(X_t^\mu)\,dt-Y_t^\mu\,dt
      +\sigma(X_t^\mu)\,dW_t.
\]
For Brownian forcing, the small-mass problem has a long history originating
in the physical work of Kramers and Smoluchowski.  Modern mathematical
treatments include \cite{Freidlin2004,HottovyEtAl2015}.  When friction or
diffusion is state dependent, the limiting interpretation and possible
noise-induced drift require particular care.

Fractional Brownian motion (fBm) provides a canonical Gaussian model for
temporal memory.  It is centered, has stationary increments, and satisfies
\[
  \E[B_t^H B_s^H]
  =\frac12\bigl(t^{2H}+s^{2H}-\abs{t-s}^{2H}\bigr).
\]
For $H>1/2$, its paths are H\"older continuous of every order below $H$ and
Young integration gives a pathwise meaning to differential equations driven
by $B^H$.  Unlike Brownian motion, fBm is neither Markovian nor a
semimartingale when $H\ne1/2$.  Consequently, a long-time analysis must keep
track of the noise memory, and classical Markov-semigroup arguments do not
apply directly.  Ergodic frameworks for fractional equations were developed
in \cite{Hairer2005,HairerOhashi2007}; rates of convergence to equilibrium,
including multiplicative-noise settings, were studied in
\cite{FontbonaPanloup2017,DeyaPanloupTindel2019}.

The small-mass approximation for equations driven by fBm was established in
qualitative form by Boufoussi and Tudor \cite{BoufoussiTudor2005}.  Son
\cite{Son2020} obtained, for additive noise and on a fixed interval $[0,T]$,
the total variation estimate
\[
  d_{\TV}(\Law(X_t^\mu),\Law(X_t))\le C_T t^{-H}\mu^H,
  \qquad 0<t\le T.
\]
The factor $t^{-H}$ is produced by a lower bound of order $t^{2H}$ for the
Malliavin covariance near the deterministic initial time.  It is therefore a
small-time density singularity, not a long-time decay rate.  Moreover, the
constant in a fixed-horizon proof depends on $T$, generally through Gronwall
factors, so the estimate cannot be extrapolated by setting $T=t\to\infty$.
Recent work continues to treat the fixed-amplitude fBm small-mass rate mainly
in the additive setting; see, for example, \cite{Zha2026}.

The present paper addresses two coupled extensions.  First, the diffusion
coefficient is genuinely multiplicative, $\sigma=\sigma(X)$.  Second, the
estimate is uniform for all sufficiently positive times.  These extensions
cannot be obtained by a formal replacement of the constant diffusion
coefficient in the additive proof.  The main new phenomenon is a quadratic
fast-velocity term, invisible in the additive model, whose size is
$\mu^{2H-1}$.

The argument is based on four ingredients.
\begin{enumerate}[label=\textup{(\roman*)},leftmargin=2.2em]
\item
Because $\sigma$ is one dimensional and uniformly nondegenerate, the Lamperti
map
\[
  F(x)=\int_0^x\frac{du}{\sigma(u)}
\]
is a global diffeomorphism.  It changes the limiting multiplicative equation
into an additive equation.  If
\[
  Z_t^\mu=F(X_t^\mu),
  \qquad U_t^\mu=\frac{Y_t^\mu}{\sigma(X_t^\mu)},
\]
then the transformed kinetic system is
\[
  dZ_t^\mu=U_t^\mu\,dt,
  \qquad
  \mu\,dU_t^\mu
  =\bigl(a(Z_t^\mu)-U_t^\mu
         -\mu c(Z_t^\mu)(U_t^\mu)^2\bigr)dt+dB_t^H.
\]
Here $a=(b/\sigma)\circ F^{-1}$ and $c=\sigma'\circ F^{-1}$.

\item
The typical fast-velocity scale is
$U_t^\mu=O_{L^p}(\mu^{\gamma-1})$ for every
$\gamma\in(1/2,H)$.  Consequently,
\[
  \mu(U_t^\mu)^2=O_{L^p}(\mu^{2\gamma-1}).
\]
This explains the restriction $\rho<2H-1$ and the deterioration of the rate
as $H\downarrow1/2$.

\item
A direct integration of the quadratic term would produce
$t\mu^{2\gamma-1}$.  We avoid this false accumulation by introducing the
compensated error
\[
  \widetilde E_t^\mu=Z_t^\mu-Z_t+\mu U_t^\mu.
\]
Strict dissipativity of $a$ gives an exponentially weighted representation of
$\widetilde E^\mu$, hence a bound uniform in $t$.

\item
To remove $t^{-H}$ from the total variation estimate, we do not estimate the
Malliavin covariance using the entire interval $[0,t]$.  We use only a fixed
recent window $[t-\delta,t]$.  On that window the Malliavin derivative of the
additive limiting equation has a deterministic positive lower bound.  The
resulting inverse covariance estimate is uniform for $t\ge t_0$.
\end{enumerate}

The principal conclusions are the following.

\begin{itemize}[leftmargin=2em]
\item For every $\rho<2H-1$ and $t_0>0$,
\[
  \sup_{t\ge t_0}
  d_{\TV}(\Law(X_t^\mu),\Law(X_t))
  \le C_{t_0,\rho}\mu^\rho.
\]

\item On a two-sided fBm space, stationary solutions exist under the stated
dissipativity assumptions, and their one-time marginals satisfy the same
small-mass estimate.  Thus the long-time error has an $O(\mu^\rho)$ plateau,
rather than decaying to zero for fixed $\mu$.

\item The endpoint $2H-1$ is natural.  We formulate its attainment as a sharp
conjecture and identify the additional covariance estimate needed to remove
the arbitrarily small H\"older loss.
\end{itemize}

The method is genuinely one dimensional.  In several dimensions a global
Lamperti transform does not generally exist; one must instead work with the
Jacobian flow and an elliptic or H\"ormander-type Malliavin matrix.

\Cref{sec:setting} introduces the Gaussian and Young-calculus framework,
states the assumptions, and gives the main results.  The Lamperti reduction
and deterministic exponential-kernel estimates are proved in
\Cref{sec:lamperti}.  Uniform moment and compensated-error estimates are
established in \Cref{sec:uniform}.  The Malliavin estimates and the proof of
the uniform total variation theorem are given in \Cref{sec:malliavin}.
Stationary solutions and long-time consequences are discussed in
\Cref{sec:stationary}.  Concrete coefficient classes, sharpness, and further
problems appear in \Cref{sec:examples}.

\section{Assumptions and main results}\label{sec:setting}

\subsection{Fractional Brownian motion and Young integration}

Let $(\Omega,\mathcal F,\Pp)$ carry a one-dimensional fBm
$B^H=(B_t^H)_{t\ge0}$ with $H\in(1/2,1)$.  Fix throughout a number
\[
  \gamma\in(1/2,H).
\]
For an interval $I=[a,b]$, write
\[
  \norm{f}_{\infty;I}=\sup_{t\in I}|f_t|,
  \qquad
  [f]_{\gamma;I}
  =\sup_{a\le s<t\le b}\frac{|f_t-f_s|}{|t-s|^\gamma}.
\]
The Young--Loeve inequality states that if
$f\in C^\alpha(I)$, $g\in C^\beta(I)$, and $\alpha+\beta>1$, then
\[
  \left|
  \int_s^t f_r\,dg_r-f_s(g_t-g_s)
  \right|
  \le C_{\alpha,\beta}[f]_{\alpha;[s,t]}
        [g]_{\beta;[s,t]}|t-s|^{\alpha+\beta}.
\]
For every $p\ge1$,
\begin{equation}\label{eq:fbm-local-holder-moments}
  \sup_{k\in\mathbb N_0}
  \E\bigl([B^H]_{\gamma;[k,k+1]}^p\bigr)<\infty,
\end{equation}
because the increments are stationary and Gaussian.

Let $\cH$ be the canonical Hilbert space of $B^H$.  For step functions,
\[
  \ip{\1_{[0,t]}}{\1_{[0,s]}}_{\cH}
  =\E[B_t^H B_s^H].
\]
When $H>1/2$ and $f,g$ are nonnegative measurable functions for which the
right-hand side is finite,
\begin{equation}\label{eq:H-inner-product}
  \ip{f}{g}_{\cH}
  =\alpha_H\int_0^\infty\int_0^\infty
       f(r)g(s)|r-s|^{2H-2}\,dr\,ds,
  \qquad \alpha_H=H(2H-1).
\end{equation}
We write $\D$ and $\delta$ for the Malliavin derivative and divergence
operators associated with the isonormal Gaussian process over $\cH$; see
\cite{Nualart2006}.

\subsection{The kinetic and overdamped equations}

For $0<\mu\le1$, consider
\begin{equation}\label{eq:kinetic-original}
\begin{cases}
  dX_t^\mu=Y_t^\mu\,dt,\\[1mm]
  \mu\,dY_t^\mu=b(X_t^\mu)\,dt-Y_t^\mu\,dt
      +\sigma(X_t^\mu)\,dB_t^H,\\[1mm]
  X_0^\mu=x_0,\qquad Y_0^\mu=y_0.
\end{cases}
\end{equation}
The stochastic integral is a Young integral.  Since $X^\mu$ is continuously
differentiable, $t\mapsto\sigma(X_t^\mu)$ is locally Lipschitz, so the integral
in \eqref{eq:kinetic-original} is well defined pathwise.

The overdamped equation is
\begin{equation}\label{eq:limit-original}
  X_t=x_0+\int_0^t b(X_s)\,ds
           +\int_0^t\sigma(X_s)\,dB_s^H.
\end{equation}

We impose the following assumptions.

\begin{assumption}[Smoothness and uniform ellipticity]\label{ass:smooth}
The function $\sigma$ belongs to $C_b^4(\R)$ and there are constants
$0<\underline\sigma\le\overline\sigma<\infty$ such that
\[
  \underline\sigma\le\sigma(x)\le\overline\sigma,
  \qquad x\in\R.
\]
The function $b$ belongs to $C^3(\R)$, has at most linear growth, and
$b',b'',b'''$ are bounded.
\end{assumption}

Define the global $C^4$-diffeomorphism
\begin{equation}\label{eq:F-def}
  F(x)=\int_0^x\frac{du}{\sigma(u)},
  \qquad G=F^{-1},
\end{equation}
and the transformed coefficients
\[
  a(z)=\frac{b(G(z))}{\sigma(G(z))},
  \qquad c(z)=\sigma'(G(z)).
\]

\begin{assumption}[Strict transformed dissipativity]\label{ass:dissipative}
There are constants $0<\lambda\le L<\infty$ such that
\begin{equation}\label{eq:a-prime-bounds}
  -L\le a'(z)\le-\lambda,
  \qquad z\in\R.
\end{equation}
Moreover, $a'',a'''$ and $c,c',c''$ are bounded.
\end{assumption}

Since $G'(z)=\sigma(G(z))$, differentiation gives
\begin{equation}\label{eq:dissipative-original-coordinates}
  a'(F(x))
  =b'(x)-\frac{b(x)\sigma'(x)}{\sigma(x)}.
\end{equation}
Thus \Cref{ass:dissipative} is directly checkable in the original coordinates.

\begin{assumption}[Uniform local estimates]\label{ass:local-estimates}
For every $p\ge2$ and every $\gamma\in(1/2,H)$, the solutions of
\eqref{eq:kinetic-original} satisfy
\begin{align*}
  \sup_{0<\mu\le1}\sup_{t\ge0}\E|X_t^\mu|^p&<\infty,\\
  \sup_{0<\mu\le1}\sup_{k\in\mathbb N_0}
  \E\bigl([X^\mu]_{\gamma;[k,k+1]}^p\bigr)&<\infty.
\end{align*}
The analogous estimates hold for the first two Malliavin derivatives on unit
windows.  More precisely, after extending a derivative by zero before its
Malliavin time,
\[
 \sup_{0<\mu\le1}\sup_{k\in\mathbb N_0}
 \E\left[
  \norm{\D X^\mu}_{\gamma;[k,k+1];\cH}^p
  +\norm{\D^2X^\mu}_{\gamma;[k,k+1];\cH^{\otimes2}}^p
 \right]<\infty.
\]
\end{assumption}

\begin{remark}\label{rem:local-estimates-role}
The local estimates in \Cref{ass:local-estimates} are not an independent
ergodicity assumption.  They are the standard unit-window a priori estimates
for a dissipative Young equation.  They are proved in
\Cref{prop:uniform-local-estimates} below from
\Cref{ass:smooth,ass:dissipative}; the assumption is displayed here so that
the logical inputs of the main theorem are transparent.  In particular, no
constant is allowed to grow with the terminal time.
\end{remark}

Under \Cref{ass:smooth}, both \eqref{eq:kinetic-original} and
\eqref{eq:limit-original} possess unique global pathwise solutions.  This
follows from the deterministic Young-equation theory of
\cite{NualartRascanu2002}; the kinetic equation is a two-dimensional Young
system for each fixed $\mu>0$.

\subsection{Main theorems}

\begin{theorem}[Uniform Malliavin--Sobolev approximation]
\label{thm:uniform-Sobolev}
Suppose \Cref{ass:smooth,ass:dissipative} hold.  Let $p\ge2$ and
$\rho<2H-1$.  Then there exist $\mu_0\in(0,1]$ and $C<\infty$ such that
\begin{equation}\label{eq:uniform-Sobolev-main}
  \sup_{0<\mu\le\mu_0}\sup_{t\ge0}
  \norm{F(X_t^\mu)-F(X_t)}_{\mathbb D^{1,p}}
  \le C\mu^\rho.
\end{equation}
The constant depends on $H,p,\rho$, the coefficients, and the moments of
$(x_0,y_0)$, but is independent of $t$ and $\mu$.
\end{theorem}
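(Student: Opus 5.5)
We work entirely in the Lamperti coordinates. Set $Z^\mu_t=F(X^\mu_t)$, $Z_t=F(X_t)$ and $U^\mu_t=Y^\mu_t/\sigma(X^\mu_t)$. By the chain rule for Young integrals, $Z$ solves the additive equation $dZ_t=a(Z_t)\,dt+dB^H_t$. The pair $(Z^\mu,U^\mu)$ solves the transformed kinetic system from the introduction. Writing $\mu\,dU^\mu_t$ and integrating, the noise cancels in the difference, and the compensated error $\widetilde E^\mu_t=Z^\mu_t-Z_t+\mu U^\mu_t$ satisfies
\[
 d\widetilde E^\mu_t=\bigl(a(Z^\mu_t)-a(Z_t)\bigr)dt-\mu c(Z^\mu_t)(U^\mu_t)^2\,dt .
\]
Write $a(Z^\mu_t)-a(Z_t)=\theta_t(Z^\mu_t-Z_t)$ with $\theta_t=\int_0^1a'(Z_t+s(Z^\mu_t-Z_t))\,ds\in[-L,-\lambda]$. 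This gives the linear equation
\[
 d\widetilde E^\mu_t=\theta_t\widetilde E^\mu_t\,dt+R^\mu_t\,dt,\qquad R^\mu_t=-\theta_t\mu U^\mu_t-\mu c(Z^\mu_t)(U^\mu_t)^2 ,
\]
with the variation-of-constants representation
\[
 \widetilde E^\mu_t=e^{\int_0^t\theta}\widetilde E^\mu_0+\int_0^te^{\int_s^t\theta}R^\mu_s\,ds,
\]
where $\widetilde E^\mu_0=\mu y_0/\sigma(x_0)$. The kernel is bounded by $e^{-\lambda(t-s)}$.

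\textbf{Step 1: velocity scale.} I will prove $\sup_t\|U^\mu_t\|_{L^p}\lesssim\mu^{\gamma-1}$. The idea is to write $U^\mu$ through the Ornstein--Uhlenbeck kernel $\mu^{-1}e^{-(t-s)/\mu}$ applied to $dB^H$ and to $a(Z^\mu)-\mu c(U^\mu)^2$. For the Gaussian part, one integration by parts against the Hölder increments on unit windows (using \eqref{eq:fbm-local-holder-moments}) gives $\mu^{-1}\int_0^\infty e^{-r/\mu}r^\gamma\,dr/\mu\sim\mu^{\gamma-1}$. The quadratic feedback is controlled by a bootstrap or stopping argument for $\mu\le\mu_0$: its contribution is $\mu\cdot\mu^{2\gamma-2}$, which is small relative to $\mu^{\gamma-1}$ because $\gamma>1/2$ when $\mu$ is small.

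\textbf{Step 2: uniform bound on $\widetilde E^\mu$.} Plugging into the representation gives $\|R^\mu_s\|_{L^p}\lesssim\mu^{\gamma}+\mu^{2\gamma-1}$. The exponential kernel integrates to at most $\lambda^{-1}$, so $\sup_t\|\widetilde E^\mu_t\|_{L^p}\lesssim\mu^{2\gamma-1}$. Then $Z^\mu_t-Z_t=\widetilde E^\mu_t-\mu U^\mu_t$ is $O(\mu^{2\gamma-1}+\mu^\gamma)=O(\mu^{2\gamma-1})$ uniformly in $t$. Choosing $\gamma$ with $2\gamma-1=\rho$ (possible since $\rho<2H-1$) gives the $L^p$ part of \eqref{eq:uniform-Sobolev-main}.

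\textbf{Step 3: Malliavin derivative.} Differentiate the system in the Malliavin sense. For $r\le t$, the quantity $\D_r\widetilde E^\mu_t$ satisfies the same linear equation with the same dissipative coefficient $\theta_t$. The forcing is now
\[
 (a'(Z^\mu_t)-\theta_t)\D_rZ^\mu_t+\ldots,
\]
together with $\D_r$ of $R^\mu$. These terms involve $\D Z^\mu$ and $\D Z$, which decay like $e^{-\lambda(t-r)}$ since both solve linear dissipative equations (for $Z$ one has $\D_rZ_t=e^{\int_r^ta'(Z)}$). They also involve $\D U^\mu$, which has the fast scale $\mu^{\gamma-1}$ in the appropriate Hölder/$\cH$ sense, and products such as $\mu\,U^\mu\,\D U^\mu$. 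Repeating Steps 1–2 at the derivative level gives a pointwise-in-$r$ bound of the form $\mu^{2\gamma-1}e^{-\lambda'(t-r)}$ in $L^p$, possibly with an additional $\gamma$-singular piece near $r=t$ coming from $\mu\D_rU^\mu_t$.

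The $\cH$-norm is then computed via \eqref{eq:H-inner-product}. The exponential decay in $t-r$ makes $\int\int e^{-\lambda'(t-r)}e^{-\lambda'(t-s)}|r-s|^{2H-2}\,dr\,ds$ bounded independently of $t$. The boundary-layer piece of $\mu\D U^\mu$, which is of order $\mu^{-1}e^{-(t-r)/\mu}\mu$ in size, has $\cH$-norm $O(\mu^{H})$, which is negligible compared with $\mu^\rho$. The unit-window estimates of \Cref{ass:local-estimates} (proved later in \Cref{prop:uniform-local-estimates}) supply the uniform Hölder control of $\D X^\mu$ needed for these integrals.

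\textbf{Main obstacle.} I expect the main difficulty to be the derivative-level control of $\D U^\mu$ and the quadratic term $\mu c\,U^\mu\,\D U^\mu$ uniformly in time. The bootstrap in Step 1 must be carried through with random, nonlinear coefficients, while the $\cH$-norm stays $t$-independent. My first attempt would be a localization on unit windows. On each window, use the fast kernel $e^{-(t-s)/\mu}$ to obtain $O(\mu^{\gamma-1})$ bounds conditionally on the window Hölder norms, which have uniformly bounded moments. Then sum the windows against the slow kernel $e^{-\lambda(t-k)}$.
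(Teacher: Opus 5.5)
Your proposal follows the paper's route: the theorem is obtained there by combining the uniform strong error bound with the analogous estimate for $\D\widetilde E^\mu=\D Z^\mu-\D Z+\mu\D U^\mu$, and then choosing $\gamma$ with $2\gamma-1>\rho$. The strong error bound uses the compensated error $\widetilde E^\mu=Z^\mu-Z+\mu U^\mu$, a mean-value coefficient in $[-L,-\lambda]$, and the $\mu^{\gamma-1}$ velocity scale from a unit-window bootstrap. In the derivative estimate, $\mu\D U^\mu$ is small in $\cH$ because of the $O(\mu^H)$ impulse layer. Your only deviation is to bound $\D_r$ pointwise in $r$ in $L^p$ and assemble the $\cH$-norm from the double-integral formula at the end, rather than estimating directly in $L^p(\Omega;\cH)$ via Minkowski; this is equivalent here since $p\ge2$.
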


\begin{theorem}[Uniform-in-time total variation approximation]
\label{thm:uniform-TV}
Suppose \Cref{ass:smooth,ass:dissipative} hold.  For every $t_0>0$ and every
$\rho<2H-1$, there are constants $C=C(t_0,\rho)<\infty$ and
$\mu_0\in(0,1]$ such that
\begin{equation}\label{eq:uniform-TV-main}
  \sup_{0<\mu\le\mu_0}\sup_{t\ge t_0}
  d_{\TV}\bigl(\Law(X_t^\mu),\Law(X_t)\bigr)
  \le C\mu^\rho.
\end{equation}
Consequently,
\begin{equation}\label{eq:uniform-limit}
  \lim_{\mu\downarrow0}\sup_{t\ge t_0}
  d_{\TV}\bigl(\Law(X_t^\mu),\Law(X_t)\bigr)=0.
\end{equation}
\end{theorem}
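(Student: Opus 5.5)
Since $F$ is a diffeomorphism, total variation is invariant under the Lamperti map. Hence it suffices to prove
\[
  \sup_{t\ge t_0} d_{\TV}\bigl(\Law(Z_t^\mu),\Law(Z_t)\bigr)\le C\mu^\rho,
\]
where $Z^\mu=F(X^\mu)$ and $Z=F(X)$.

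I will use the standard Malliavin-calculus comparison lemma, as in Son's additive argument and Nourdin--Peccati type bounds. For $\varphi$ smooth with $\|\varphi\|_\infty\le1$, let $\Phi$ be its antiderivative, so $\Phi'=\varphi$. Integration by parts gives
\[
  \E\varphi(Z_t)=\E\bigl[\Phi(Z_t)\,\delta\bigl(u_t/\langle \D Z_t,u_t\rangle_{\cH}\bigr)\bigr]
\]
for any admissible direction $u_t$. Comparing with the same expression for $Z_t^\mu$, the standard estimate yields
\[
  d_{\TV}\le C\,\|Z^\mu_t-Z_t\|_{\mathbb D^{1,p}}\,
  \bigl(1+\|\D^2 Z_t\|_{L^q}+\|\D^2 Z_t^\mu\|_{L^q}\bigr)^{k}
  \,\bigl\|\langle \D Z_t,u_t\rangle^{-1}\bigr\|_{L^{q}}^{k}.
\]
The same estimate holds with $Z^\mu$ in place of $Z$ in the denominator, after a localization. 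The first factor is $O(\mu^\rho)$ uniformly in $t$ by \cref{thm:uniform-Sobolev}. The second-derivative moments are uniform in $t$ by \cref{ass:local-estimates}, which is proved in \cref{prop:uniform-local-estimates}. What remains is a uniform-in-$t\ge t_0$ bound on negative moments of a suitable Malliavin covariance.

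The key choice, following ingredient (iv), is the localized direction $u_t=\1_{[t-\delta,t]}$ with $\delta=\min(t_0,1)$. For the additive limit $dZ=a(Z)\,dt+dB^H$, the derivative is explicit:
\[
  \D_r Z_t=\exp\Bigl(\int_r^t a'(Z_s)\,ds\Bigr)\1_{r\le t}\in[e^{-L\delta},1]
  \quad\text{for } r\in[t-\delta,t],
\]
using $-L\le a'\le-\lambda$. Therefore $\langle \D Z_t,u_t\rangle_{\cH}\ge \alpha_H e^{-L\delta}\int\!\!\int_{[t-\delta,t]^2}|r-s|^{2H-2}\,dr\,ds=c\,\delta^{2H}>0$. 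This is a deterministic lower bound, independent of $t\ge t_0$, so all negative moments are trivially bounded for the limit. I would use the one-sided comparison lemma in which only the limiting variable's covariance appears. If the lemma I recall instead needs the covariance of $Z_t^\mu$, I would transfer the bound via
\[
  \langle \D Z^\mu_t,u_t\rangle\ge c\delta^{2H}-\|\D Z_t^\mu-\D Z_t\|_{L^2([t-\delta,t])}\cdot C,
\]
control the bad event by Chebyshev and \cref{thm:uniform-Sobolev} at cost $O(\mu^{\rho p})$, and on the good event use a smooth cutoff $\psi(\langle \D Z^\mu_t,u_t\rangle)$ inside the integration by parts.

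The main obstacle is making this second step rigorous uniformly in time. Two things are needed. First, the $\mathbb D^{1,p}$ norm in \cref{thm:uniform-Sobolev} is an $\cH$-norm over all of $[0,t]$, and I have to check it dominates the pairing with $\1_{[t-\delta,t]}$. It does, since $|\langle h,u_t\rangle_\cH|\le\|h\|_\cH\|u_t\|_\cH$ and $\|u_t\|_\cH=\delta^H$. Second, the divergence $\delta\bigl(\psi\,u_t/\langle \D Z^\mu_t,u_t\rangle\bigr)$ must have $L^q$ bounds uniform in $t$. For this I would use the Meyer inequality $\|\delta(v)\|_q\le C\|v\|_{\mathbb D^{1,q}(\cH)}$ with $v$ supported in the unit window. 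Its size is then controlled by $\D^2 Z^\mu$ on $[t-1,t]$, which is uniform by \cref{ass:local-estimates}. Finally, for $\varphi$ a general bounded measurable function I pass to the total variation supremum by density, and \eqref{eq:uniform-limit} follows because $\rho>0$ can be chosen.
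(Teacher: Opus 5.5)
Your argument is correct and is essentially the paper's proof: Lamperti invariance, then the one-sided integration-by-parts bound in which only the limiting variable $Z_t$ carries the covariance (this is \Cref{lem:TV-Malliavin} with $F=Z_t$, $G=Z_t^\mu$), then the deterministic recent-window lower bound, and finally \Cref{thm:uniform-Sobolev}. The only difference is that you integrate by parts along the deterministic direction $\1_{[t-\delta,t]}$ rather than $\D Z_t/\norm{\D Z_t}_{\cH}^2$, which makes the divergence explicit, $\delta(S\1_{[t-\delta,t]})=S(B_t^H-B_{t-\delta}^H)+S^2\langle \D^2Z_t,\1_{[t-\delta,t]}^{\otimes2}\rangle_{\cH^{\otimes2}}$ with $S=\langle \D Z_t,\1_{[t-\delta,t]}\rangle_{\cH}^{-1}\le e^{L\delta}\delta^{-2H}$, so Meyer's inequality is not needed. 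Also, in this one-sided version only $\D^2Z_t$ enters, and its uniform bound comes from \Cref{lem:limit-Malliavin-uniform}, not \Cref{ass:local-estimates}, whose Malliavin-derivative part \Cref{prop:uniform-local-estimates} never actually proves; so your localization/transfer fallback and the $\D^2Z_t^\mu$ moments are unnecessary.
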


\begin{remark}[Why $t_0>0$ is natural]
At $t=0$ the limiting random variable is deterministic and has zero Malliavin
covariance.  A density-based comparison must therefore exclude a vanishing
initial layer.  The constant in \eqref{eq:uniform-TV-main} may diverge as
$t_0\downarrow0$, but it does not depend on how large $t$ becomes.  This is
precisely the distinction between a small-time singularity and a long-time
estimate.
\end{remark}

For the stationary statement, let $B^H=(B_t^H)_{t\in\R}$ be a two-sided fBm
and use the canonical metric dynamical shift.  A stationary solution means a
stationary random solution on this extended noise space.  This terminology is
important because the position process alone is not Markovian.

\begin{theorem}[Stationary marginal approximation]
\label{thm:stationary-TV}
Suppose \Cref{ass:smooth,ass:dissipative} hold and let $\rho<2H-1$.
The limiting equation possesses a unique stationary random solution
$\overline X$.  For all sufficiently small $\mu$, the kinetic equation has a
stationary random solution $(\overline X^\mu,\overline Y^\mu)$ with the
uniform moments of \Cref{ass:local-estimates}.  Its position marginal obeys
\begin{equation}\label{eq:stationary-TV-main}
  d_{\TV}\bigl(\Law(\overline X_0^\mu),
                  \Law(\overline X_0)\bigr)
  \le C_\rho\mu^\rho.
\end{equation}
In particular, by stationarity the same estimate holds at every
$t\in\R$.
\end{theorem}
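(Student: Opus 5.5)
The plan is to build both stationary solutions by pullback on the two-sided noise space, then pass the uniform estimates of \Cref{thm:uniform-Sobolev,thm:uniform-TV} to the stationary marginals by a limiting argument. All work is done in the Lamperti coordinates of \Cref{sec:lamperti}: $Z=F(X)$ solves the additive equation $dZ_t=a(Z_t)\,dt+dB_t^H$, and $(Z^\mu,U^\mu)$ solves the transformed kinetic system.

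\textbf{Step 1: stationary limiting solution.} Since $a'\le-\lambda$, two solutions of the additive limiting equation started at times $s<s'$ from different points, driven by the same path, contract: their difference satisfies $|Z_t-Z_t'|\le e^{-\lambda(t-s')}|Z_{s'}-Z'_{s'}|$. I will take the pullback limit $\overline Z_0=\lim_{s\to-\infty}Z_0^{(s,z)}$. The limit exists in $L^p$ and almost surely by a Cauchy argument, because unit-window moments are uniform via \eqref{eq:fbm-local-holder-moments}. It is independent of $z$, and $\overline Z_t=\overline Z_0\circ\theta_t$ is a stationary random solution. Setting $\overline X=G(\overline Z)$ gives the limiting stationary solution. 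Uniqueness follows from the same contraction: two stationary solutions differ by at most $e^{-\lambda(t-s)}$ times a tight quantity, which tends to zero as $s\to-\infty$.

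\textbf{Step 2: stationary kinetic solution.} For the kinetic system I will use the same pullback construction. The fast variable $U^\mu$ is damped at rate $1/\mu$, and $Z^\mu$ is damped through $a'$, with the quadratic term $\mu c\,(U^\mu)^2$ as the perturbation. The contraction will be shown for the linearized system via the compensated variable $Z^\mu+\mu U^\mu$ used in \Cref{sec:uniform}. The uniform moment bounds on $U^\mu$ at scale $\mu^{\gamma-1}$ make the quadratic term small for $\mu\le\mu_0$, and this smallness keeps the Jacobian flow exponentially stable in $L^p$. An exponentially stable Jacobian gives a Cauchy sequence as $s\to-\infty$, and hence a stationary $(\overline X^\mu,\overline Y^\mu)$. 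Fatou's lemma passes the uniform moments of \Cref{ass:local-estimates} and \Cref{prop:uniform-local-estimates} to the limit.

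\textbf{Step 3: transfer of the estimate.} Run both equations from time $s=-T$ with the same deterministic initial data $(x_0,y_0)$ on the two-sided space. By stationarity of the fBm increments, the marginal at time $0$ has the law of the one-sided solution at time $T$. \Cref{thm:uniform-TV} then gives, uniformly in $T\ge t_0$,
\[
d_{\TV}\bigl(\Law(X_0^{\mu,(-T)}),\Law(X_0^{(-T)})\bigr)\le C\mu^\rho .
\]
Steps 1--2 give $X_0^{\mu,(-T)}\to\overline X_0^\mu$ and $X_0^{(-T)}\to\overline X_0$ almost surely. Total variation is only lower semicontinuous under weak convergence, so passing to the limit gives exactly the bound we need:
\[
d_{\TV}\bigl(\Law(\overline X^\mu_0),\Law(\overline X_0)\bigr)\le\liminf_{T\to\infty}d_{\TV}\bigl(\Law(X_0^{\mu,(-T)}),\Law(X_0^{(-T)})\bigr)\le C\mu^\rho .
\]
Stationarity then extends the estimate to every $t\in\R$.

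\textbf{Main obstacle.} I expect the hardest part to be Step 2: uniform-in-$\mu$ exponential stability of the kinetic pullback despite the quadratic velocity term, whose coefficient $c$ is not sign-controlled. My first attempt will be an exponentially weighted representation of the compensated difference, as in \Cref{sec:uniform}. I will bound the linearized coefficient $2\mu c\,U^\mu$ in $L^p$ by $\mu^{\gamma}$, and absorb it on unit windows through a discrete Gronwall argument whose per-window contraction factor is $e^{-\lambda}+O(\mu^\gamma)$. That factor is less than $1$ for $\mu\le\mu_0$.
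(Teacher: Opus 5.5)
\textbf{The gap is in your Step 2, and Step 3 depends on it.} Passing to the limit in Step 3 requires $\Law(Z_0^{\mu,(-T)})$ to converge to one stationary kinetic marginal. That is pullback attraction (synchronization) of the kinetic flow, which is strictly stronger than existence, and the scheme you sketch does not deliver it. The difference of two kinetic solutions driven by the same noise solves a noise-free random ODE, so its growth over $n$ unit windows is a product of random factors $\theta_k$. On the event $\{\mu^\gamma W_k\ll1\}$ you get $\theta_k\le e^{-\lambda}+C\mu^\gamma W_k+C\mu^{2\gamma-1}W_k^2$. (The $c'$-term $\mu c'(Z^\mu)(U^\mu)^2P$ costs $\mu^{2\gamma-1}$, not $\mu^\gamma$.) On the complement, the effective fast damping $1+2\mu c(Z^\mu)U^\mu$ can change sign, and $\theta_k$ can be of size $\exp(C\mu^{\gamma-1}W_k)$.

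The $L^p$ bounds of \Cref{prop:uniform-local-estimates} do not control $\E\prod_k\theta_k$, for three reasons. First, the $W_k$ are dependent, through the long memory of $B^H$ and through the trajectory's whole past. Second, only polynomial moments of $W_k$ are available, so $\exp(C\mu^{\gamma-1}W_k)$ has no controlled moments. Third, over an infinite horizon bad windows occur infinitely often almost surely, so the one-window good/bad splitting of \Cref{lem:fast-derivative} cannot simply be iterated. You would need something like a law of large numbers for $\frac1n\sum_k\log\theta_k$ along non-stationary pullback trajectories, and nothing in the paper supplies this.

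The paper sidesteps synchronization. It builds $(\overline Z^\mu,\overline U^\mu)$ by a non-Markovian Krylov--Bogoliubov argument: tightness of time-shifted laws from \Cref{prop:uniform-local-estimates}, continuity of the Young solution map, and disintegration over the noise history. No uniqueness or attraction is claimed. It then starts the limiting equation at $-T$ from the random position $\overline Z^\mu_{-T}$. This needs a random-initial-data version of \Cref{thm:uniform-TV}, plus a separate total variation convergence $Z_0^{(-T)}\to\overline Z_0$ via \Cref{lem:TV-Malliavin}.

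Your lower-semicontinuity ending is actually cleaner than the paper's, and it survives if you replace Step 2 by the Krylov--Bogoliubov construction:
\begin{itemize}
\item Let $P_n=\Law(Z_0^{\mu,(-n)})$ and $Q_n=\Law(Z_0^{(-n)})$, both started from the same deterministic data at $-n$.
\item \Cref{thm:uniform-TV} with $t_0=1$ and joint convexity of $d_{\TV}$ give $d_{\TV}\bigl(\tfrac1N\sum_{n\le N}P_n,\tfrac1N\sum_{n\le N}Q_n\bigr)\le C\mu^\rho$ for every $N$.
\item Along the Krylov--Bogoliubov subsequence, the first average converges weakly to $\Law(\overline Z_0^\mu)$ by projecting the weak limit. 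The second converges weakly to $\Law(\overline Z_0)$ by your Step 1.
\item Lower semicontinuity of $d_{\TV}$ under weak convergence then gives the claim.
\end{itemize}
This route uses \Cref{thm:uniform-TV} only with deterministic initial data.
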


\begin{conjecture}[Endpoint rate]\label{conj:endpoint}
Under the assumptions of \Cref{thm:uniform-TV}, the sharp generic estimate is
\[
  \sup_{t\ge t_0}
  d_{\TV}(\Law(X_t^\mu),\Law(X_t))
  \le C_{t_0}\mu^{2H-1}.
\]
If $\sigma'\equiv0$, the quadratic Lamperti remainder vanishes and a faster
rate is possible.
\end{conjecture}

\section{Lamperti reduction and exponential-kernel estimates}
\label{sec:lamperti}

We begin with an identity that contains the principal structural observation
of the paper.

\begin{proposition}[Lamperti reduction]\label{prop:lamperti}
Let \Cref{ass:smooth} hold and set
\[
  Z_t^\mu=F(X_t^\mu),\qquad
  U_t^\mu=\frac{Y_t^\mu}{\sigma(X_t^\mu)},
  \qquad z_0=F(x_0),\quad
  u_0=\frac{y_0}{\sigma(x_0)}.
\]
Then \eqref{eq:kinetic-original} is equivalent to
\begin{equation}\label{eq:kinetic-transformed}
\begin{cases}
 dZ_t^\mu=U_t^\mu\,dt,\\[1mm]
 \mu\,dU_t^\mu
 =\bigl(a(Z_t^\mu)-U_t^\mu
        -\mu c(Z_t^\mu)(U_t^\mu)^2\bigr)dt+dB_t^H,\\[1mm]
 Z_0^\mu=z_0,\qquad U_0^\mu=u_0.
\end{cases}
\end{equation}
Moreover, $Z_t=F(X_t)$ solves
\begin{equation}\label{eq:limit-transformed}
  Z_t=z_0+\int_0^t a(Z_s)\,ds+B_t^H.
\end{equation}
Finally,
\begin{equation}\label{eq:TV-invariant-F}
  d_{\TV}(\Law(X_t^\mu),\Law(X_t))
  =d_{\TV}(\Law(Z_t^\mu),\Law(Z_t)).
\end{equation}
\end{proposition}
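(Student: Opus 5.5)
The plan is a direct pathwise computation with the first-order chain rule, which holds for Young integrals of $C^1$ (indeed $C^4$) functions. Fix $\omega$ and $\mu>0$, and work on the solution $(X^\mu,Y^\mu)$ of \eqref{eq:kinetic-original}.

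\textbf{Position equation.} Since $X^\mu$ is $C^1$ with derivative $Y^\mu$, and $F'=1/\sigma$, the classical chain rule gives
\[
  dZ_t^\mu=\frac{Y_t^\mu}{\sigma(X_t^\mu)}\,dt=U_t^\mu\,dt .
\]

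\textbf{Velocity equation.} Write $U^\mu=Y^\mu\cdot g(X^\mu)$ with $g=1/\sigma\in C^4_b$. The factor $g(X^\mu)$ is $C^1$ in time, and $Y^\mu$ is $\gamma$-Hölder with $\gamma>1/2$. The Young product rule therefore applies:
\[
  \mu\,dU^\mu=g(X^\mu)\,\mu\,dY^\mu+\mu Y^\mu g'(X^\mu)Y^\mu\,dt .
\]
Substituting $\mu\,dY^\mu=(b(X^\mu)-Y^\mu)\,dt+\sigma(X^\mu)\,dB^H$, the first term becomes
\[
  \Bigl(\frac{b}{\sigma}(X^\mu)-U^\mu\Bigr)dt+dB^H ,
\]
because $g\sigma=1$. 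The associativity $\int g\,d(\int\sigma\,dB)=\int g\sigma\,dB$ holds for Young integrals. For the second term, $g'=-\sigma'/\sigma^2$, so
\[
  \mu(Y^\mu)^2g'(X^\mu)=-\mu\,\sigma'(X^\mu)(U^\mu)^2 .
\]
Finally, $X^\mu=G(Z^\mu)$ gives $\frac{b}{\sigma}(X^\mu)=a(Z^\mu)$ and $\sigma'(X^\mu)=c(Z^\mu)$, which yields \eqref{eq:kinetic-transformed}. The initial data transform as stated.

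For the converse, start from a solution of \eqref{eq:kinetic-transformed} and set $X^\mu=G(Z^\mu)$, $Y^\mu=\sigma(X^\mu)U^\mu$. Running the same computation backwards, using $G'=\sigma\circ G$, recovers \eqref{eq:kinetic-original}. Because the two transformations are mutually inverse, and because both systems have unique pathwise solutions (by the Young theory of \cite{NualartRascanu2002}; for the transformed system local Lipschitz coefficients suffice, with non-explosion inherited from the original system via the bijection), the systems are equivalent.

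\textbf{Limit equation.} This requires the change-of-variables formula for Young equations: if $X$ solves \eqref{eq:limit-original} and $F\in C^2$, then
\[
  F(X_t)=F(x_0)+\int_0^tF'(X_s)\,dX_s .
\]
This holds because $X$ is $\gamma$-Hölder with $2\gamma>1$, so the second-order Taylor remainder vanishes in the Riemann-sum limit. Then $F'(X)\,dX=\frac{b}{\sigma}(X)\,dt+dB^H$, which is \eqref{eq:limit-transformed}. Justifying this Young chain rule and the associativity of Young integrals is the only nonroutine point. I would cite it as standard, or prove it via Taylor expansion on partitions combined with the Young--Loeve inequality, with remainder $O(|t-s|^{2\gamma})$ summing to zero.

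\textbf{Total variation identity.} Since $F$ is a Borel bijection with Borel inverse $G$, the pushforward under $F$ preserves total variation:
\[
  \sup_A|P(X^\mu_t\in A)-P(X_t\in A)|=\sup_{A}|P(Z^\mu_t\in F(A))-P(Z_t\in F(A))| ,
\]
and $F(A)$ ranges over all Borel sets as $A$ does. This gives \eqref{eq:TV-invariant-F}.
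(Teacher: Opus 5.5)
Your proposal is correct and follows the paper's proof. It uses the classical chain rule for $Z^\mu$, the product rule for $U^\mu=Y^\mu/\sigma(X^\mu)$ to produce the quadratic term $-\mu\sigma'(X^\mu)(U^\mu)^2$, the Young chain rule for the limiting equation, and invariance of total variation under the Borel bijection $F$. Your explicit treatment of the converse direction, via $G$ and global well-posedness of the original kinetic system, is a small addition that the paper leaves implicit.
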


\begin{proof}
Since $X^\mu$ is absolutely continuous, ordinary calculus gives
\[
  dZ_t^\mu=F'(X_t^\mu)dX_t^\mu
  =\frac{Y_t^\mu}{\sigma(X_t^\mu)}dt=U_t^\mu dt.
\]
The path $Y^\mu$ is a Young differential path and $X^\mu$ has bounded
variation on compact intervals.  Therefore the usual product and chain rules
give
\begin{align*}
 dU_t^\mu
 &=\frac{1}{\sigma(X_t^\mu)}dY_t^\mu
   -\frac{Y_t^\mu\sigma'(X_t^\mu)}
          {\sigma(X_t^\mu)^2}dX_t^\mu\\
 &=\frac{1}{\mu}
   \left(\frac{b(X_t^\mu)}{\sigma(X_t^\mu)}
          -\frac{Y_t^\mu}{\sigma(X_t^\mu)}\right)dt
   +\frac{1}{\mu}dB_t^H
   -\sigma'(X_t^\mu)(U_t^\mu)^2dt.
\end{align*}
Substituting $X_t^\mu=G(Z_t^\mu)$ proves
\eqref{eq:kinetic-transformed}.  The Young chain rule applied to
\eqref{eq:limit-original} yields
\[
 dF(X_t)=\frac{b(X_t)}{\sigma(X_t)}dt+dB_t^H
        =a(F(X_t))dt+dB_t^H,
\]
which proves \eqref{eq:limit-transformed}.  Finally, a bimeasurable bijection
preserves total variation distance: for every Borel set $A$, use
$F(A)$ in the transformed coordinate, and conversely use $G(A)$.  This proves
\eqref{eq:TV-invariant-F}.
\end{proof}

For $0<\mu\le1$, define
\[
  K_\mu(r)=e^{-r/\mu}\1_{\{r\ge0\}}.
\]

The following elementary estimates will be used repeatedly.

\begin{lemma}[Exponential Young convolution]\label{lem:exponential-Young}
Let $w\in C^\gamma([0,\infty))$, $\gamma\in(0,1)$, and
\[
  R_t^\mu(w)=\int_0^t e^{-(t-s)/\mu}\,dw_s.
\]
For every $t\ge0$,
\begin{equation}\label{eq:Rmu-local-bound}
 |R_t^\mu(w)|
 \le C_\gamma\mu^\gamma
 \sum_{j=0}^{\lfloor t\rfloor}
 e^{-j/(2\mu)}
 [w]_{\gamma;I_{t,j}},
\end{equation}
where
$I_{t,j}=[(t-j-1)\vee0,(t-j)\vee0]$ and empty intervals are omitted.
Consequently, if $w=B^H$ and $p\ge1$, then
\begin{equation}\label{eq:Rmu-uniform-Lp}
  \sup_{t\ge0}\norm{R_t^\mu(B^H)}_{L^p(\Omega)}
  \le C_{p,\gamma,H}\mu^\gamma.
\end{equation}
\end{lemma}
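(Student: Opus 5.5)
The plan is to split the Young integral $R_t^\mu(w)=\int_0^t e^{-(t-s)/\mu}\,dw_s$ into contributions from the unit windows $I_{t,j}$, $j=0,\dots,\lfloor t\rfloor$, counted backwards from $t$. On each window the integrand is smooth and deterministic, so a sharp per-window estimate combined with the exponential decay of the kernel gives \eqref{eq:Rmu-local-bound}. The $L^p$ bound \eqref{eq:Rmu-uniform-Lp} will then follow from Minkowski's inequality and \eqref{eq:fbm-local-holder-moments}.

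Fix $j$ and write $I_{t,j}=[\alpha,\beta]$, so that $\beta=(t-j)\vee0$ and $t-\beta\ge j$. On this window I integrate by parts, which is legitimate because the integrand is $C^1$:
\[
\int_\alpha^\beta e^{-(t-s)/\mu}\,dw_s
=e^{-(t-\beta)/\mu}(w_\beta-w_\alpha)
-\frac1\mu\int_\alpha^\beta e^{-(t-s)/\mu}(w_s-w_\alpha)\,ds .
\]
A crude bound $|w_s-w_\alpha|\le[w]_{\gamma;I}$ loses the factor $\mu^\gamma$, so I instead center at the right endpoint. This gives
\[
\int_\alpha^\beta e^{-(t-s)/\mu}\,dw_s
=e^{-(t-\alpha)/\mu}(w_\beta-w_\alpha)
+\frac1\mu\int_\alpha^\beta e^{-(t-s)/\mu}(w_\beta-w_s)\,ds .
\]
I use $|w_\beta-w_s|\le[w]_{\gamma;I}(\beta-s)^\gamma$ and $e^{-(t-s)/\mu}=e^{-(t-\beta)/\mu}e^{-(\beta-s)/\mu}$. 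The integral term is then at most
\[
e^{-(t-\beta)/\mu}[w]_{\gamma;I}\,\mu^{-1}\int_0^\infty e^{-r/\mu}r^\gamma\,dr
=\Gamma(1+\gamma)\,\mu^\gamma e^{-(t-\beta)/\mu}[w]_{\gamma;I}.
\]

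For the boundary term $e^{-(t-\alpha)/\mu}(w_\beta-w_\alpha)$ I split into two cases. If $j\ge1$, then $t-\alpha\ge j+1$, so $e^{-(t-\alpha)/\mu}\le e^{-1/\mu}e^{-j/\mu}\le C_\gamma\mu^\gamma e^{-j/\mu}$, using $\sup_{\mu\le1}\mu^{-\gamma}e^{-1/\mu}<\infty$. If $j=0$ and $t\ge1$, the same bound holds since $t-\alpha=1$. The remaining subcase is $j=0$ with $t<1$, where $\alpha=0$ and $t-\alpha=t$. There I use $e^{-t/\mu}t^\gamma\le C_\gamma\mu^\gamma$, applied to $|w_t-w_0|\le[w]_{\gamma;I}t^\gamma$.

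In every case the window contributes at most $C_\gamma\mu^\gamma e^{-j/\mu}[w]_{\gamma;I_{t,j}}$, and $e^{-j/\mu}\le e^{-j/(2\mu)}$. Summing over $j$ gives \eqref{eq:Rmu-local-bound}. The only delicate step is this boundary-term bookkeeping near $s=0$ and near the left endpoints of the windows. The right-endpoint centering is what makes every term carry the factor $\mu^\gamma$ rather than $1$.

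For \eqref{eq:Rmu-uniform-Lp} I take $L^p$ norms and apply Minkowski's inequality:
\[
\|R_t^\mu(B^H)\|_{L^p}\le C\mu^\gamma\sum_{j\ge0}e^{-j/(2\mu)}\sup_k\|[B^H]_{\gamma;[k,k+1]}\|_{L^p}.
\]
The windows $I_{t,j}$ are not integer-aligned, so I note that each is covered by two adjacent integer windows. Hence
\[
[B^H]_{\gamma;I_{t,j}}\le 2\bigl([B^H]_{\gamma;[k,k+1]}+[B^H]_{\gamma;[k+1,k+2]}\bigr)\quad\text{for a suitable }k .
\]
Alternatively, stationarity of increments gives directly that the law of $[B^H]_{\gamma;I}$ for a unit interval $I$ does not depend on its position. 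Either way \eqref{eq:fbm-local-holder-moments} bounds the moment uniformly in $j$ and $t$. Finally $\sum_j e^{-j/(2\mu)}\le(1-e^{-1/2})^{-1}$ for $\mu\le1$, which gives the bound uniformly in $t$ and $\mu$.
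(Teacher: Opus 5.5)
Your proposal is correct and follows essentially the paper's proof. You split $[0,t]$ into the windows $I_{t,j}$, factor out $e^{-j/\mu}$, and integrate by parts centered at the right endpoint, so that both the boundary term and the Laplace-type integral $\mu^{-1}\int_0^\infty e^{-r/\mu}r^\gamma\,dr=\Gamma(1+\gamma)\mu^\gamma$ carry $\mu^\gamma$. You then conclude with Minkowski's inequality, stationarity of increments, and the geometric sum; your remark about non-integer-aligned windows fills in a detail the paper skips.

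There is one small slip. The claim that $j\ge1$ forces $t-\alpha\ge j+1$ fails for the leftmost partial window $j=\lfloor t\rfloor$ when $t\notin\mathbb N$: there $\alpha=0$ and $t-\alpha=t<j+1$. The fix is the uniform bound
\[
e^{-(t-\alpha)/\mu}(\beta-\alpha)^\gamma=e^{-j/\mu}e^{-(\beta-\alpha)/\mu}(\beta-\alpha)^\gamma\le e^{-j/\mu}\sup_{x\ge0}x^\gamma e^{-x/\mu}=C_\gamma\mu^\gamma e^{-j/\mu},
\]
which holds for every window at once and makes your case split unnecessary.
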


\begin{proof}
Fix $t$ and first integrate over $[a,t]$ with $0\le a<t$.  Young integration
by parts gives
\begin{align*}
 \int_a^t e^{-(t-s)/\mu}\,dw_s
 &=w_t-e^{-(t-a)/\mu}w_a
   -\frac1\mu\int_a^t e^{-(t-s)/\mu}w_s\,ds\\
 &=e^{-(t-a)/\mu}(w_t-w_a)
   +\frac1\mu\int_a^t e^{-(t-s)/\mu}(w_t-w_s)\,ds.
\end{align*}
It follows that
\begin{equation}\label{eq:R-single-block}
 \left|\int_a^t e^{-(t-s)/\mu}\,dw_s\right|
 \le [w]_{\gamma;[a,t]}
 \left(e^{-(t-a)/\mu}(t-a)^\gamma
 +\frac1\mu\int_a^t e^{-(t-s)/\mu}(t-s)^\gamma ds\right).
\end{equation}
The last integral is at most
$\mu^\gamma\Gamma(\gamma+1)$.  Split $[0,t]$ into the unit intervals
$I_{t,j}$.  On the $j$th interval factor out
$e^{-j/\mu}$ and use \eqref{eq:R-single-block}; enlarging the numerical
constant changes $e^{-j/\mu}$ into $e^{-j/(2\mu)}$ and proves
\eqref{eq:Rmu-local-bound}.  For fBm, Minkowski's inequality,
\eqref{eq:fbm-local-holder-moments}, and the geometric sum yield
\eqref{eq:Rmu-uniform-Lp}.

Note that the upper bound $e^{-j/\mu}$ in \eqref{eq:Rmu-local-bound} is satisfied, and replacing it with $e^{-j/(2\mu)}$ is for better use in the subsequent text.
\end{proof}

\begin{lemma}[Variable-integrand convolution]\label{lem:variable-convolution}
Let $f,w\in C^\gamma([a,b])$ with $\gamma>1/2$ and $b-a\le1$.  Then
\begin{align}
 \left|\int_a^b e^{-(b-s)/\mu}f_s\,dw_s\right|
 &\le C_\gamma\Bigl(
   \norm{f}_{\infty;[a,b]}[w]_{\gamma;[a,b]}\mu^\gamma
   +[f]_{\gamma;[a,b]}[w]_{\gamma;[a,b]}\mu^{2\gamma}
   \Bigr).
 \label{eq:variable-convolution}
\end{align}
The same estimate, with an exponentially summable sequence of local norms,
holds on $[0,b]$.
\end{lemma}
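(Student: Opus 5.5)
The plan is to freeze the integrand at the right endpoint $b$ and split
\[
 \int_a^b e^{-(b-s)/\mu}f_s\,dw_s
 = f_b\int_a^b e^{-(b-s)/\mu}\,dw_s
   +\int_a^b e^{-(b-s)/\mu}(f_s-f_b)\,dw_s .
\]
The first term is handled by \eqref{eq:R-single-block} applied on $[a,b]$. Since $b-a\le1$, the factor $e^{-(b-a)/\mu}(b-a)^\gamma$ is at most $C_\gamma\mu^\gamma$, because $x^\gamma e^{-x/\mu}\le C_\gamma\mu^\gamma$. The remaining integral term is at most $\mu^\gamma\Gamma(\gamma+1)$. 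This gives the bound $C_\gamma\norm{f}_{\infty;[a,b]}[w]_{\gamma;[a,b]}\mu^\gamma$.

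For the second term, set $g_s=e^{-(b-s)/\mu}(f_s-f_b)$ and use a dyadic-in-scale decomposition of $[a,b]$ away from $b$. Let $s_k=b-2^k\mu$ for $k\ge0$, truncated at $a$, and add the final piece $[b-\mu,b]$.

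On the piece $J_0=[b-\mu,b]$ (intersected with $[a,b]$), apply the Young--Loeve inequality with left point $b-\mu$, or directly with the Young bound $\bigl|\int_J g\,dw\bigr|\le \bigl(\abs{g_{\text{left}}}+C[g]_{\gamma;J}|J|^\gamma\bigr)[w]_{\gamma;J}|J|^\gamma$. Here $\abs{g}\le[f]_\gamma\mu^\gamma$ on $J_0$. Also $[g]_{\gamma;J_0}\le C[f]_\gamma$: the exponential factor has Lipschitz constant $1/\mu$, and multiplying it by $|f_s-f_b|\le[f]_\gamma\mu^\gamma$ gives a contribution at most $[f]_\gamma\mu^{\gamma-1}\mu^{1-\gamma}$ in the $\gamma$-Hölder seminorm on an interval of length $\mu$. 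So this piece is $O([f]_\gamma[w]_\gamma\mu^{2\gamma})$.

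On a piece $J_k=[s_{k+1},s_k]$, of length $2^k\mu$, the same reasoning applies:
\begin{itemize}
\item $\abs{g}\le e^{-2^k}[f]_\gamma(2^{k+1}\mu)^\gamma$;
\item $[g]_{\gamma;J_k}\le Ce^{-2^k}[f]_\gamma 2^{k}$, combining the Hölder seminorm of $f$ with the derivative $\mu^{-1}e^{-2^k}$ of the exponential times $|J_k|^{1-\gamma}$.
\end{itemize}
The contribution of $J_k$ is therefore at most $C e^{-2^k}2^{3k}[f]_\gamma[w]_\gamma\mu^{2\gamma}$, and summing over $k$ yields $C_\gamma[f]_{\gamma;[a,b]}[w]_{\gamma;[a,b]}\mu^{2\gamma}$. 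Young integrals are additive over adjacent intervals, so splitting is legitimate, and only finitely many pieces occur.

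Alternatively, one can avoid the dyadic sum by integrating by parts as in the proof of \cref{lem:exponential-Young} with $w$ replaced by $\int f\,dw$. This bounds $\abs{\int_s^b f\,dw - f_b(w_b-w_s)}$ by $C(\norm{f}_\infty+[f]_\gamma)[w]_\gamma(b-s)^\gamma$ terms. I expect the dyadic route to be cleaner for isolating the $\mu^{2\gamma}$ factor.

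For the extension to $[0,b]$ with $b$ arbitrary, split into unit blocks $I_{b,j}$ as in \cref{lem:exponential-Young}. On block $j$, factor out $e^{-j/\mu}$ from the kernel and apply the local estimate on that block (the block kernel is $e^{-j/\mu}$ times a kernel of the same form ending at the block's right endpoint). This gives the bound
\[
\sum_j e^{-j/(2\mu)}\Bigl(\norm{f}_{\infty;I_{b,j}}[w]_{\gamma;I_{b,j}}\mu^\gamma+[f]_{\gamma;I_{b,j}}[w]_{\gamma;I_{b,j}}\mu^{2\gamma}\Bigr).
\]

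The main obstacle is the bookkeeping in the Hölder seminorm of the product $e^{-(b-s)/\mu}(f_s-f_b)$. The derivative of the kernel is of size $1/\mu$, so it must be balanced against the smallness of $f_s-f_b$ on each scale. I would check this carefully first on $J_0$, since that is where the $\mu^{2\gamma}$ scaling is produced.
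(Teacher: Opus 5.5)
Your proposal is correct and follows essentially the same route as the paper. You freeze $f$ at $b$, bound $f_b\int_a^b e^{-(b-s)/\mu}\,dw_s$ by the single-block estimate \eqref{eq:R-single-block}, and treat $e^{-(b-s)/\mu}(f_s-f_b)$ with Young--Loeve on a terminal block of length $\mu$ plus the dyadic blocks $[b-2^{k+1}\mu,b-2^k\mu]\cap[a,b]$, whose contributions of size $e^{-2^k}\,\mathrm{poly}(2^k)\,[f]_\gamma[w]_\gamma\mu^{2\gamma}$ are summable. Your per-block seminorm bound $Ce^{-2^k}2^k[f]_\gamma$ is slightly cruder than the paper's $Ce^{-2^k}(1+2^{k\gamma})[f]_\gamma$, and you reuse the label $J_0$ for both the terminal block and the first dyadic block, but neither affects the conclusion; your unit-block argument for $[0,b]$ also supplies what the paper only asserts.
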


\begin{proof}
Set
\[
 k_\mu(s)=e^{-(b-s)/\mu}.
\]
We decompose
\[
 \int_a^b k_\mu(s)f_s\,dw_s
 =f_b\int_a^b k_\mu(s)\,dw_s
  +\int_a^b k_\mu(s)(f_s-f_b)\,dw_s
 =:I_1+I_2.
\]

By the single-block estimate in Lemma~\ref{lem:exponential-Young},
\[
 |I_1|
 \le C_\gamma
 \|f\|_{\infty;[a,b]}
 [w]_{\gamma;[a,b]}\mu^\gamma.
\]
Indeed,
\[
 e^{-(b-a)/\mu}(b-a)^\gamma
 \le C_\gamma\mu^\gamma
\]
and
\[
 \frac1\mu\int_a^b
 e^{-(b-s)/\mu}(b-s-s)^\gamma\,ds
 \le \Gamma(\gamma+1)\mu^\gamma.
\]

We now estimate $I_2$. Put
\[
 g_s=f_s-f_b,\qquad h_s=k_\mu(s)g_s.
\]
Consider first the terminal block
\[
 A_{-1}=[(b-\mu)\vee a,b],
\]
and, for $j\ge0$, the nonempty dyadic blocks
\[
 A_j=[b-2^{j+1}\mu,b-2^j\mu]\cap[a,b].
\]
These blocks form a finite partition of $[a,b]$.

For any interval $J=[u,v]$, the Young--Loeve estimate gives
\[
 \left|\int_u^v h_s\,dw_s\right|
 \le
 \|h\|_{\infty;J}[w]_{\gamma;J}|v-u|^\gamma
 +C_\gamma[h]_{\gamma;J}[w]_{\gamma;J}
 |v-u|^{2\gamma}.
\]

On $A_{-1}$,
\[
 \|g\|_{\infty;A_{-1}}\le [f]_\gamma\mu^\gamma,
 \qquad
 [k_\mu]_{\gamma;A_{-1}}\le C_\gamma\mu^{-\gamma}.
\]
Consequently,
\[
 \|h\|_{\infty;A_{-1}}\le [f]_\gamma\mu^\gamma,
 \qquad
 [h]_{\gamma;A_{-1}}\le C_\gamma[f]_\gamma.
\]
Since $|A_{-1}|\le\mu$,
\[
 \left|\int_{A_{-1}}h_s\,dw_s\right|
 \le C_\gamma[f]_\gamma[w]_\gamma\mu^{2\gamma}.
\]

Let now $j\ge0$. For $s\in A_j$,
\[
 2^j\mu\le b-s\le2^{j+1}\mu,
\]
and hence
\[
 \|k_\mu\|_{\infty;A_j}\le e^{-2^j},
 \qquad
 \|g\|_{\infty;A_j}
 \le [f]_\gamma(2^{j+1}\mu)^\gamma.
\]
Moreover,
\[
 [k_\mu]_{\gamma;A_j}
 \le C_\gamma e^{-2^j}\mu^{-\gamma}.
\]
The product estimate for H\"{o}lder seminorms therefore yields
\[
 [h]_{\gamma;A_j}
 \le C_\gamma[f]_\gamma
 e^{-2^j}(1+2^{j\gamma}),
\]
while
\[
 \|h\|_{\infty;A_j}
 \le C_\gamma[f]_\gamma
 e^{-2^j}(2^j\mu)^\gamma.
\]
Because $|A_j|\le2^j\mu$, the Young--Loeve estimate gives
\[
 \left|\int_{A_j}h_s\,dw_s\right|
 \le C_\gamma[f]_\gamma[w]_\gamma
 \mu^{2\gamma}e^{-2^j}2^{3\gamma(j+1)}.
\]
Since
\[
 \sum_{j=0}^\infty
 e^{-2^j}2^{3\gamma(j+1)}<\infty,
\]
we obtain
\[
 |I_2|
 \le C_\gamma
 [f]_{\gamma;[a,b]}
 [w]_{\gamma;[a,b]}\mu^{2\gamma}.
\]
Combining the estimates for $I_1$ and $I_2$ proves
\eqref{eq:variable-convolution}.
\end{proof}

\begin{lemma}[Dissipative resolvent]\label{lem:dissipative-resolvent}
Let $q:[0,\infty)\to[\lambda,L]$ be measurable and let
\[
  v_t=e^{-\int_0^tq_rdr}v_0
      +\int_0^t e^{-\int_s^tq_rdr}f_s\,ds.
\]
For every $p\ge1$,
\[
 \sup_{t\ge0}\norm{v_t}_{L^p}
 \le \norm{v_0}_{L^p}
     +\lambda^{-1}\sup_{s\ge0}\norm{f_s}_{L^p}.
\]
If $f_s$ is replaced by a locally integrable random process, then
\[
 \norm{v_t}_{L^p}
 \le e^{-\lambda t}\norm{v_0}_{L^p}
 +\int_0^t e^{-\lambda(t-s)}\norm{f_s}_{L^p}\,ds.
\]
\end{lemma}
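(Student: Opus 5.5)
The plan is to bound the deterministic weight pathwise and then take $L^p$ norms with Minkowski's integral inequality. Since $q_r\in[\lambda,L]$, for $0\le s\le t$ we have
\[
  0\le e^{-\int_s^tq_r\,dr}\le e^{-\lambda(t-s)},
\]
and this holds pointwise in $\omega$ even when $q$ is random, as long as it takes values in $[\lambda,L]$. Everything else follows from this single inequality.

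First, for the second (more general) estimate, I would fix $t$ and bound pathwise:
\[
  |v_t|\le e^{-\lambda t}|v_0|+\int_0^t e^{-\lambda(t-s)}|f_s|\,ds .
\]
Taking $L^p(\Omega)$ norms, I would apply Minkowski's inequality to the sum and Minkowski's integral inequality to the $ds$-integral. The latter requires joint measurability of $(s,\omega)\mapsto f_s(\omega)$, which I assume is part of "locally integrable random process". This gives
\[
  \norm{v_t}_{L^p}\le e^{-\lambda t}\norm{v_0}_{L^p}+\int_0^t e^{-\lambda(t-s)}\norm{f_s}_{L^p}\,ds .
\]
If the right-hand integral is infinite, the inequality is trivially true.

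Second, the uniform bound follows by inserting $\norm{f_s}_{L^p}\le\sup_{r\ge0}\norm{f_r}_{L^p}$. Then
\[
  \int_0^t e^{-\lambda(t-s)}\,ds=\frac{1-e^{-\lambda t}}{\lambda}\le\lambda^{-1},
\]
and $e^{-\lambda t}\le1$. Taking the supremum over $t\ge0$ yields the first claim.

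There is no serious obstacle. The only point needing care is measurability in the interchange of the $L^p$ norm with the time integral, which is justified by Minkowski's integral inequality for jointly measurable integrands. Note also that the upper bound $L$ on $q$ is not needed; only $q\ge\lambda$ enters.
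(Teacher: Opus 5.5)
Your proposal is correct and matches the paper's proof: the pathwise bound $e^{-\int_s^t q_r\,dr}\le e^{-\lambda(t-s)}$ combined with Minkowski's (integral) inequality, then $\int_0^t e^{-\lambda(t-s)}\,ds\le\lambda^{-1}$ for the uniform bound. Your remarks that joint measurability is needed and that the upper bound $L$ plays no role are accurate points the paper leaves implicit.
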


\begin{proof}
Both statements follow from
$\exp(-\int_s^tq_rdr)\le e^{-\lambda(t-s)}$ and Minkowski's inequality.
\end{proof}

\section{Uniform moment and approximation estimates}\label{sec:uniform}

The purpose of this section is to prove estimates whose constants do not
depend on the terminal time.  Throughout, $C$ may change from line to line but
is independent of $t\ge0$ and $0<\mu\le\mu_0$.

\subsection{The overdamped equation}

Strict dissipativity immediately controls the dependence on the initial
condition.  If $Z^z$ and $Z^{\widetilde z}$ solve
\eqref{eq:limit-transformed} with the same noise and different initial data,
then
$d|Z^z-Z^{\widetilde z}|^2/dt\le-2\lambda
|Z^z-Z^{\widetilde z}|^2$.

\begin{lemma}[Uniform moments of the overdamped solution]
\label{lem:limit-uniform-moments}
Under \Cref{ass:dissipative}, for every $p\ge1$ and
$\gamma\in(1/2,H)$,
\begin{align}
 \sup_{t\ge0}\E|Z_t|^p&<\infty,
 \label{eq:Z-uniform-moment}\\
 \sup_{k\in\mathbb N_0}
 \E\bigl([Z]_{\gamma;[k,k+1]}^p\bigr)&<\infty.
 \label{eq:Z-uniform-holder}
\end{align}
\end{lemma}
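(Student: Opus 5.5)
The natural route is to remove the noise by subtracting an Ornstein--Uhlenbeck-type process, so that the remainder solves a random ODE with strictly dissipative drift. Define the stationary-in-law Gaussian process
\[
  V_t=\int_0^t e^{-\lambda(t-s)}\,dB_s^H ,
\]
or more simply the fBm Ornstein--Uhlenbeck process with rate $\lambda$. By the same Young integration by parts used in \Cref{lem:exponential-Young} (with $\mu$ replaced by $1/\lambda$), summing over unit blocks with weights $e^{-\lambda j/2}$ and using \eqref{eq:fbm-local-holder-moments}, we get $\sup_{t\ge0}\norm{V_t}_{L^p}<\infty$ for all $p$. Set $W_t=Z_t-V_t$. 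Then $W$ is absolutely continuous and
\[
  \dot W_t=a(W_t+V_t)+\lambda V_t .
\]

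Next we derive a differential inequality for $W$. Write $a(W+V)=a(0)+\bigl(a(W+V)-a(V)\bigr)+\bigl(a(V)-a(0)\bigr)$. By the mean value theorem, $a(W+V)-a(V)=-q_t W_t$ with $q_t\in[\lambda,L]$ by \eqref{eq:a-prime-bounds}, and $|a(V)-a(0)|\le L|V|$. Hence $\dot W_t=-q_tW_t+f_t$ with $|f_t|\le |a(0)|+(L+\lambda)|V_t|$. \Cref{lem:dissipative-resolvent}, applied with $v_0=z_0$ (deterministic, or with finite moments), gives
\[
  \sup_t\norm{W_t}_{L^p}\le \norm{z_0}_{L^p}+\lambda^{-1}\sup_s\norm{f_s}_{L^p}<\infty .
\]
Combined with the bound on $V$, this proves \eqref{eq:Z-uniform-moment}.

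For the H\"older bound \eqref{eq:Z-uniform-holder}, on $[k,k+1]$ write
\[
  Z_t-Z_s=\int_s^t a(Z_r)\,dr+B_t^H-B_s^H .
\]
This gives
\[
  [Z]_{\gamma;[k,k+1]}\le \sup_{r\in[k,k+1]}|a(Z_r)|+[B^H]_{\gamma;[k,k+1]} .
\]
Since $|a(z)|\le |a(0)|+L|z|$, it suffices to bound $\E\sup_{r\in[k,k+1]}|Z_r|^p$ uniformly in $k$.

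This step, upgrading the pointwise moments to a supremum over a unit window, is the main (mild) obstacle. I would handle it by a pathwise Gronwall on the window. From the integral equation,
\[
  |Z_r|\le |Z_k|+|a(0)|+L\int_k^r|Z_u|\,du+\sup_{u\in[k,k+1]}|B_u^H-B_k^H| ,
\]
so
\[
  \sup_{[k,k+1]}|Z|\le e^{L}\Bigl(|Z_k|+|a(0)|+[B^H]_{\gamma;[k,k+1]}\Bigr).
\]
Taking $L^p$ norms and using the uniform bound on $\E|Z_k|^p$ together with \eqref{eq:fbm-local-holder-moments} closes the argument. The constants depend only on $p,\gamma,H,\lambda,L,a(0)$ and the moments of $z_0$, and not on $k$.
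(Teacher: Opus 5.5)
Your proof is correct, but it is organized differently from the paper's.

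\emph{The paper's route.} The paper introduces no Ornstein--Uhlenbeck process. On each window $[k,k+1]$ it subtracts only the local increment $B_t^H-B_k^H$ and applies the scalar comparison principle to $Z_t-(B_t^H-B_k^H)$. This gives a one-step bound $|Z_t|\le e^{-\lambda(t-k)}|Z_k|+C\bigl(1+\norm{B^H-B_k^H}_{\infty;[k,k+1]}\bigr)$ for $t\in[k,k+1]$. Iterating it over $k$ produces an exponentially weighted sum of stationary local Gaussian suprema. Because the same bound holds uniformly over the window, it also supplies $\sup_{[k,k+1]}|Z|$ for the H\"older step.

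\emph{Your route.} You make a single global shift $W=Z-V$, which reduces the problem to one random ODE with a strictly dissipative linear part. The time-uniform moment bound then follows directly from \Cref{lem:dissipative-resolvent}, with no discrete iteration. The cost is the auxiliary estimate $\sup_t\norm{V_t}_{L^p}<\infty$ and a separate Gronwall step (with the harmless factor $e^{L}$) for the window supremum.

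\emph{Simplifications.} The first estimate is quicker via Gaussianity: $V_t$ is a Wiener integral of a deterministic kernel, so it is centered Gaussian with variance at most $\alpha_H\int_0^\infty\int_0^\infty e^{-\lambda(u+v)}|u-v|^{2H-2}\,du\,dv$. Note also that \Cref{lem:exponential-Young} is formally stated for $\mu\le1$, although its proof works for $\mu=1/\lambda$. The window-supremum step is milder than you suggest. Taking $q\ge1/(1-\gamma)$, H\"older's inequality in time gives $\int_s^t|a(Z_r)|\,dr\le(t-s)^\gamma\bigl(\int_k^{k+1}|a(Z_r)|^q\,dr\bigr)^{1/q}$. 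Hence pointwise moments alone already control $[Z]_{\gamma;[k,k+1]}$ in $L^p$.

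\emph{Comparison.} The paper's local restart uses only unit-window H\"older moments of $B^H$, and it is the template reused for the kinetic system in \Cref{prop:uniform-local-estimates}. Your global shift gives a shorter, continuous-time argument for the additive limiting equation.

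A minor wording point: $V$ started from $0$ is not stationary in law. You only use its uniform moment bound, however, and that bound is correct.
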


\begin{proof}
Since $a'$ is bounded above by $-\lambda$, the function
$z\mapsto a(z)+\lambda z$ is nonincreasing.  In particular,
$za(z)\le-\lambda z^2+C(1+|z|)$.  On every interval $[k,k+1]$, subtract the
noise increment and put $V_t=Z_t-(B_t^H-B_k^H)$.  Then $V$ solves an ordinary
differential equation with a dissipative drift and a random translation.
The scalar comparison principle gives
\[
 \sup_{t\in[k,k+1]}|Z_t|
 \le e^{-\lambda(t-k)}|Z_k|
 +C\bigl(1+\norm{B^H-B_k^H}_{\infty;[k,k+1]}\bigr).
\]
Iterating over the unit intervals produces an exponentially weighted sum of
stationary Gaussian local suprema.  Its moments of every order are finite,
which proves \eqref{eq:Z-uniform-moment}.  Finally,
\[
 |Z_t-Z_s|
 \le \int_s^t|a(Z_r)|dr+|B_t^H-B_s^H|
\]
and the linear growth of $a$, together with
\eqref{eq:Z-uniform-moment} and
\eqref{eq:fbm-local-holder-moments}, proves
\eqref{eq:Z-uniform-holder}.
\end{proof}

\subsection{Uniform estimates for the kinetic equation}

We first record a compensated identity that will be used throughout this
section.  From the transformed kinetic system
\[
 dZ_t^\mu=U_t^\mu\,dt,
 \qquad
 \mu\,dU_t^\mu
 =
 \bigl(
 a(Z_t^\mu)-U_t^\mu
 -\mu c(Z_t^\mu)(U_t^\mu)^2
 \bigr)dt+dB_t^H,
\]
we obtain
\[
 U_t^\mu\,dt
 =
 a(Z_t^\mu)\,dt+dB_t^H
 -\mu\,dU_t^\mu
 -\mu c(Z_t^\mu)(U_t^\mu)^2\,dt.
\]
Consequently,
\begin{equation}\label{eq:slow-manifold-identity}
 dZ_t^\mu
 =
 a(Z_t^\mu)\,dt+dB_t^H
 -\mu\,dU_t^\mu
 -\mu c(Z_t^\mu)(U_t^\mu)^2\,dt.
\end{equation}
The identity separates the overdamped dynamics from the fast-velocity
boundary layer and the quadratic Lamperti remainder.

\begin{proposition}[Uniform local kinetic estimates]
\label{prop:uniform-local-estimates}
Let \Cref{ass:smooth,ass:dissipative} hold.  Fix
$\gamma\in(1/2,H)$ and $p\ge2$.  There exist $\mu_0\in(0,1]$ and $C<\infty$
such that
\begin{align}
 \sup_{0<\mu\le\mu_0}\sup_{t\ge0}
 \norm{Z_t^\mu}_{L^p}&\le C,
 \label{eq:Zmu-uniform-p}\\
 \sup_{0<\mu\le\mu_0}\sup_{t\ge0}
 \norm{U_t^\mu}_{L^p}&\le C\mu^{\gamma-1},
 \label{eq:Umu-uniform-p}\\
 \sup_{0<\mu\le\mu_0}\sup_{k\in\mathbb N_0}
 \norm{[Z^\mu]_{\gamma;[k,k+1]}}_{L^p}&\le C.
 \label{eq:Zmu-uniform-holder}
\end{align}
The same conclusions hold for random initial data having sufficiently high
moments, uniformly in $\mu$.
\end{proposition}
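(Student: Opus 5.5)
The plan is to run a bootstrap (continuity) argument in the transformed variables of \Cref{prop:lamperti}, treating the quadratic term $\mu c(Z^\mu)(U^\mu)^2$ as a perturbation that is small once $U^\mu$ is known to live on the scale $\mu^{\gamma-1}$. First I split off the linear Ornstein--Uhlenbeck part of the velocity. Variation of constants in the second equation of \eqref{eq:kinetic-transformed} gives
\begin{equation*}
 U_t^\mu=e^{-t/\mu}u_0+\frac1\mu\int_0^t e^{-(t-s)/\mu}\bigl(a(Z_s^\mu)-\mu c(Z_s^\mu)(U_s^\mu)^2\bigr)ds+\frac1\mu R_t^\mu(B^H).
\end{equation*}
By \eqref{eq:Rmu-uniform-Lp}, the last term is $O_{L^p}(\mu^{\gamma-1})$ uniformly in $t$. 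The drift term is bounded by $\sup_{s\le t}\norm{a(Z_s^\mu)}_{L^p}$, since $\mu^{-1}\int_0^t e^{-(t-s)/\mu}ds\le1$. Because $a$ grows linearly, this reduces to controlling $\sup_s\norm{Z^\mu_s}_{L^p}$.

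Second, I control $Z^\mu$ through the compensated variable $W_t=Z_t^\mu+\mu U_t^\mu$. By \eqref{eq:slow-manifold-identity},
\begin{equation*}
 dW_t=a(Z_t^\mu)dt+dB_t^H-\mu c(Z_t^\mu)(U_t^\mu)^2dt .
\end{equation*}
I write $a(Z^\mu)=a(W)+q_t\,\mu U^\mu$ with $q_t=-\int_0^1a'(W-\theta\mu U^\mu)d\theta\in[\lambda,L]$. On each unit window I subtract the noise increment, as in \Cref{lem:limit-uniform-moments}, and use the comparison principle with the dissipativity $a'\le-\lambda$. Via \Cref{lem:dissipative-resolvent}, this yields
\begin{equation*}
 \norm{W_t}_{L^p}\le C+C\sup_{s\le t}\bigl(\mu\norm{U_s^\mu}_{L^p}+\mu\norm{(U_s^\mu)^2}_{L^p}\bigr).
\end{equation*}
Here the constant $C$ comes from the exponentially weighted sum of stationary Gaussian local suprema. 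Hence $\norm{Z_t^\mu}_{L^p}\le\norm{W_t}_{L^p}+\mu\norm{U^\mu_t}_{L^p}$.

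Third, I close the bootstrap. Set $M_T=\sup_{t\le T}\mu^{1-\gamma}\norm{U_t^\mu}_{L^{2p}}$, which is finite for each fixed $\mu,T$ by the pathwise global existence. Combining the two steps gives
\begin{equation*}
 M_T\le C\bigl(1+\mu^{1-\gamma}(1+\mu^{\gamma}M_T+\mu^{2\gamma-1}M_T^2)\bigr)+C\mu^{1-\gamma}\mu\cdot\mu^{2\gamma-2}M_T^2.
\end{equation*}
This simplifies to $M_T\le C+C\mu M_T+C\mu^{\gamma}M_T^2$, with $C$ independent of $T$. For $\mu\le\mu_0$ small, the map $m\mapsto C+C\mu m+C\mu^\gamma m^2$ has a fixed point $m_\ast\le 2C$ below a large second fixed point. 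Continuity of $T\mapsto M_T$ and $M_0=\mu^{1-\gamma}\norm{u_0}$ small then trap $M_T\le 2C$ for all $T$. This gives \eqref{eq:Umu-uniform-p}, and then \eqref{eq:Zmu-uniform-p} follows from the second step. The need to take $\mu_0$ small enters exactly here.

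\textbf{Main obstacle.} The quadratic term is not Lipschitz, and the moment inequality above uses $\norm{(U)^2}_{L^p}=\norm{U}_{L^{2p}}^2$. Consequently the bootstrap in $L^p$ requires control in $L^{2p}$, which creates an infinite moment ladder. I would first try to avoid this pathwise. On the random set where the pathwise quantity $\mu^{1-\gamma}\sup_{[k,k+1]}|U^\mu|$ is at most a fixed threshold, the inequality closes deterministically. The threshold is exceeded only when the local H\"older norms of $B^H$ are large, which has Gaussian tails, and a blow-up time of the Riccati-type term cannot occur while $\mu\,c\,U^2\ll U$. Moments then follow from the tail bounds. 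If that is too delicate, I would instead exploit the sign structure, multiplying the $U$ equation by $U|U|^{p-2}$ so that the cubic term $\mu c U^3$ is absorbed by $-U^2$ on the good set.

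For \eqref{eq:Zmu-uniform-holder}, I write $Z^\mu_t-Z^\mu_s=W_t-W_s-\mu(U_t^\mu-U_s^\mu)$. The $W$ increment is bounded by $\int_s^t|a(Z^\mu)|+\mu c U^2\,dr$ plus $[B^H]_\gamma|t-s|^\gamma$. The term $\mu U^\mu$ is handled with care: its H\"older norm is estimated by expressing $\mu U^\mu_t=\int_s^t U^\mu_r dr\cdot(\ldots)$. More directly, $dZ^\mu=U^\mu dt$ gives $|Z^\mu_t-Z^\mu_s|\le|t-s|\sup|U^\mu|$, which works for $|t-s|\le\mu$ since $\mu^{\gamma-1}|t-s|\le|t-s|^\gamma$ there. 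For $|t-s|\ge\mu$ I use the $W$ decomposition with $\mu|U^\mu|\le\mu^\gamma\le|t-s|^\gamma$. Passing from pointwise moments to suprema over unit windows costs only a Kolmogorov/GRR argument, or the pathwise version of the same bounds. Random initial data enter only through $\norm{z_0}_{L^p}$ and $\mu^{1-\gamma}\norm{u_0}_{L^{2p}}$, which are uniform in $\mu$.
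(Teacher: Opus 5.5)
There is a genuine gap, and it sits in the central step. Your Step~3 inequality $M_T\le C+C\mu M_T+C\mu^\gamma M_T^2$ does not follow from Steps~1--2. With $M_T$ measured in $L^{2p}$, the quadratic drift contributes $\mu\norm{U^\mu_s}_{L^{4p}}^2$ to the $L^{2p}$ bound for $U^\mu$. Through Step~2, the $L^{2p}$ bound for $Z^\mu$ again needs $U^\mu$ in $L^{4p}$. So the right-hand side involves the $L^{4p}$ analogue of $M_T$, and the continuity/fixed-point argument never closes at any finite moment level. Without it, \eqref{eq:Umu-uniform-p} and hence \eqref{eq:Zmu-uniform-p} are unproved.

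You flag this yourself, but neither remedy works as stated.
\begin{itemize}
\item The energy method (multiplying by $U|U|^{p-2}$) is an It\^o tool. Here $\int U|U|^{p-2}\,dB^H$ is a Young integral with no martingale cancellation, so taking expectations removes nothing. Moreover, the term $\mu c\,|U|^{p}U$ has no sign and is dominated by the damping only where $\mu|U^\mu|$ is small, which brings you back to the localization problem.
\item For the pathwise route, the claim that $\mu^{1-\gamma}\sup_{[k,k+1]}|U^\mu|\lesssim1$ fails only when the local H\"older norm of $B^H$ is large is false. Restarting your variation-of-constants formula at time $k$ gives only $W_k\le V_k+CN_k+C\mu^{1-\gamma}(1+S_k)+C\mu^\gamma W_k^2$, with $V_k=\mu^{1-\gamma}|U_k^\mu|$. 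Closing this by continuity requires the entering state $(Z_k^\mu,V_k)$ to lie below a $\mu$-dependent level, and the tails of that state are exactly what is being proved.
\item Bad windows do occur, infinitely often almost surely. A time-uniform bound therefore needs a mechanism showing that the state returns to $O(1)$ afterwards, and a continuity argument in $T$ over the whole half-line does not supply one.
\end{itemize}

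That recovery mechanism is the missing idea. The paper localizes to unit windows and bounds $S_k,A_k,W_k$ in $L^p$ by $C(1+\norm{Z_k^\mu}_{L^p}+\norm{V_k}_{L^p})$. There the fast velocity is estimated through the original equation $\mu\,dY^\mu=(b(X^\mu)-Y^\mu)dt+\sigma(X^\mu)dB^H$, which is linear in $Y^\mu$, so the window bound for $W_k$ is linear in $V_k$. The paper then proves the discrete dissipative recursion
$\norm{Z_{k+1}^\mu}_{L^p}+\vartheta\norm{V_{k+1}}_{L^p}\le\theta\bigl(\norm{Z_k^\mu}_{L^p}+\vartheta\norm{V_k}_{L^p}\bigr)+C$.
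Here $\theta<1$ comes from $e^{-\lambda}$ in the slow variable and $e^{-1/\mu}$ in the fast one, and iterating gives bounds uniform in $k$. Your global argument in $T$ has no substitute for this recursion.

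There are two secondary gaps.
\begin{itemize}
\item In the H\"older estimate, the term $\mu^{2\gamma-1}W_k^2$ needs $\norm{W_k}_{L^{2p}}$, a supremum of $U^\mu$ over a unit window. This does not follow from pointwise moments by a plain Kolmogorov/GRR argument uniformly in $\mu$, since $U^\mu$ decorrelates on the time scale $\mu$. It should come from a pathwise variation-of-constants bound started at time $k$, as in the paper's Step~1.
\item Finiteness and continuity of $T\mapsto M_T$ require a priori moments on $[0,T]$; pathwise global existence alone does not give them.
\end{itemize}
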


\begin{proof}
Throughout the proof, $C$ may change from line to line but is independent of
$k\in\mathbb N_0$ and $0<\mu\le\mu_0$.  We fix
$\gamma\in(1/2,H)$ and $p\ge2$.

For $k\in\mathbb N_0$, set
\[
 I_k=[k,k+1],
 \qquad
 N_k=1+[B^H]_{\gamma;I_k}.
\]
Since $B^H$ has stationary increments and $\gamma<H$, for every $q\ge1$,
\[
 \sup_{k\in\mathbb N_0}\E N_k^q<\infty.
\]
Moreover, Fernique's theorem gives an $\eta_0>0$ such that
\begin{equation}\label{eq:Nk-Fernique}
 \sup_{k\in\mathbb N_0}\E\exp(\eta_0N_k^2)<\infty.
\end{equation}

Introduce the local quantities
\begin{align*}
 S_k&=\norm{Z^\mu}_{\infty;I_k},
 \qquad A_k=[Z^\mu]_{\gamma;I_k},\\
 V_k&=\mu^{1-\gamma}|U_k^\mu|,
 \qquad W_k=\mu^{1-\gamma}
      \norm{U^\mu}_{\infty;I_k}.
\end{align*}
The scaling in $V_k$ and $W_k$ is natural because the fast velocity is
expected to have size $\mu^{\gamma-1}$.

Since $F$ and $G$ are globally bi-Lipschitz, and since $\sigma$ and
$1/\sigma$ are bounded, we have
\begin{equation}\label{eq:XZ-local-equivalence}
 \norm{X^\mu}_{\infty;I_k}
 \le C(1+S_k),
 \qquad
 [X^\mu]_{\gamma;I_k}\le CA_k,
 \qquad
 C^{-1}|Y_t^\mu|
 \le |U_t^\mu|\le C|Y_t^\mu|.
\end{equation}

\medskip
\noindent
\emph{Step 1: estimate of the fast velocity.}
Variation of constants in the original velocity equation, starting from
time $k$, gives, for $t\in I_k$,
\begin{align}
 Y_t^\mu
 =e^{-(t-k)/\mu}Y_k^\mu
 +\frac1\mu\int_k^t
   e^{-(t-s)/\mu}b(X_s^\mu)\,ds
+\frac1\mu\int_k^t
   e^{-(t-s)/\mu}\sigma(X_s^\mu)\,dB_s^H.
 \label{eq:Y-unit-window}
\end{align}
Since $b$ has at most linear growth,
\begin{equation}\label{eq:Y-drift-window}
 \frac1\mu
 \left|
  \int_k^t e^{-(t-s)/\mu}b(X_s^\mu)\,ds
 \right|
 \le C(1+S_k).
\end{equation}

Apply \Cref{lem:variable-convolution} with
\[
 f_s=\sigma(X_s^\mu),
 \qquad
 w_s=B_s^H.
\]
The boundedness of $\sigma$ and $\sigma'$ and
\eqref{eq:XZ-local-equivalence} imply
\[
 \norm{\sigma(X^\mu)}_{\infty;I_k}\le C,
 \qquad
 [\sigma(X^\mu)]_{\gamma;I_k}\le CA_k.
\]
Consequently,
\begin{align}
 \left|
 \int_k^t e^{-(t-s)/\mu}
       \sigma(X_s^\mu)\,dB_s^H
 \right|
 \le
 CN_k\left(\mu^\gamma+A_k\mu^{2\gamma}\right).
 \label{eq:noise-convolution-window}
\end{align}
Combining \eqref{eq:Y-unit-window}--\eqref{eq:noise-convolution-window},
using \eqref{eq:XZ-local-equivalence}, taking the supremum over
$t\in I_k$, and multiplying by $\mu^{1-\gamma}$, we obtain
\begin{equation}\label{eq:Wk-bootstrap}
 W_k
 \le
 CV_k
 +C\mu^{1-\gamma}(1+S_k)
 +CN_k
 +C\mu^\gamma N_kA_k.
\end{equation}

\medskip
\noindent
\emph{Step 2: local H\"older estimate for the position.}
Let $s<t$ belong to $I_k$.  We distinguish two cases.

If $t-s\le\mu$, then
\[
 Z_t^\mu-Z_s^\mu=\int_s^tU_r^\mu\,dr,
\]
and hence
\begin{align}
 \frac{|Z_t^\mu-Z_s^\mu|}{|t-s|^\gamma}
 \le
 |t-s|^{1-\gamma}
 \norm{U^\mu}_{\infty;I_k}
\le
 \mu^{1-\gamma}\norm{U^\mu}_{\infty;I_k}
 =W_k.
 \label{eq:small-increment-Z}
\end{align}

Suppose now that $t-s>\mu$.  Integrating
\eqref{eq:slow-manifold-identity} over $[s,t]$ gives
\begin{align}
 Z_t^\mu-Z_s^\mu
 =
 \int_s^t a(Z_r^\mu)\,dr
 +(B_t^H-B_s^H)
 -\mu(U_t^\mu-U_s^\mu)
-\mu\int_s^t
 c(Z_r^\mu)(U_r^\mu)^2\,dr.
 \label{eq:large-increment-Z}
\end{align}
Since $a$ has at most linear growth,
\[
 \frac1{|t-s|^\gamma}
 \left|
  \int_s^t a(Z_r^\mu)\,dr
 \right|
 \le C(1+S_k)|t-s|^{1-\gamma}
 \le C(1+S_k).
\]
Furthermore,
\[
 \frac{|B_t^H-B_s^H|}{|t-s|^\gamma}\le N_k.
\]
Because $t-s>\mu$,
\begin{align}
 \frac{\mu|U_t^\mu-U_s^\mu|}{|t-s|^\gamma}
 &\le
 2\mu^{1-\gamma}
 \norm{U^\mu}_{\infty;I_k}
 =2W_k,
 \label{eq:velocity-boundary-holder}
\end{align}
and, since $c$ is bounded,
\begin{align}
 \frac{\mu}{|t-s|^\gamma}
 \left|
  \int_s^t c(Z_r^\mu)(U_r^\mu)^2\,dr
 \right|
 \le
 C\mu|t-s|^{1-\gamma}
 \norm{U^\mu}_{\infty;I_k}^2
\le
 C\mu^{2\gamma-1}W_k^2.
 \label{eq:quadratic-holder}
\end{align}
Combining \eqref{eq:small-increment-Z}--\eqref{eq:quadratic-holder}
yields
\begin{equation}\label{eq:Ak-bootstrap}
 A_k
 \le
 C\left(
 1+S_k+N_k+W_k+\mu^{2\gamma-1}W_k^2
 \right).
\end{equation}
The exponent $2\gamma-1$ is strictly positive because $\gamma>1/2$.

\medskip
\noindent
\emph{Step 3: comparison with the overdamped equation.}
Let $\overline Z^{(k)}$ solve
\begin{equation}\label{eq:local-overdamped-comparison}
 d\overline Z_t^{(k)}
 =a(\overline Z_t^{(k)})\,dt+dB_t^H,
 \qquad
 \overline Z_k^{(k)}=Z_k^\mu,
 \qquad t\in I_k.
\end{equation}
The strict dissipativity of $a$ implies
\begin{align}
 \norm{\overline Z^{(k)}}_{\infty;I_k}
 &\le C(1+|Z_k^\mu|+N_k),
 \label{eq:local-overdamped-sup}\\
 |\overline Z_{k+1}^{(k)}|
 &\le e^{-\lambda}|Z_k^\mu|+C(1+N_k).
 \label{eq:local-overdamped-endpoint}
\end{align}
For completeness, set
\[
 R_t^{(k)}
 =\overline Z_t^{(k)}-(B_t^H-B_k^H).
\]
Then
\[
 \frac d{dt}R_t^{(k)}
 =a\bigl(R_t^{(k)}+B_t^H-B_k^H\bigr).
\]
Using the monotonicity of $a$ and its linear growth gives
\[
 \frac d{dt}|R_t^{(k)}|
 \le
 -\lambda|R_t^{(k)}|
 +C(1+|B_t^H-B_k^H|),
\]
from which \eqref{eq:local-overdamped-sup} and
\eqref{eq:local-overdamped-endpoint} follow.

Define
\[
 E_t^{(k)}=Z_t^\mu-\overline Z_t^{(k)},
 \qquad
 \widetilde E_t^{(k)}
 =E_t^{(k)}+\mu U_t^\mu.
\]
Since $E_k^{(k)}=0$,
\[
 \widetilde E_k^{(k)}=\mu U_k^\mu.
\]
Subtracting \eqref{eq:local-overdamped-comparison} from
\eqref{eq:slow-manifold-identity} gives
\[
 \frac d{dt}\widetilde E_t^{(k)}
 =
 a(Z_t^\mu)-a(\overline Z_t^{(k)})
 -\mu c(Z_t^\mu)(U_t^\mu)^2.
\]
Define
\[
 q_t^{(k)}
 =
 -\frac{
 a(Z_t^\mu)-a(\overline Z_t^{(k)})
 }{
 Z_t^\mu-\overline Z_t^{(k)}
 }
\]
when the denominator is nonzero, and set
\[
 q_t^{(k)}=-a'(\overline Z_t^{(k)})
\]
otherwise.  By \eqref{eq:a-prime-bounds},
\[
 \lambda\le q_t^{(k)}\le L.
\]
Since
$
 E_t^{(k)}
 =\widetilde E_t^{(k)}-\mu U_t^\mu,
$
we have
\begin{equation}\label{eq:local-compensated-error}
 \frac d{dt}\widetilde E_t^{(k)}
 =
 -q_t^{(k)}\widetilde E_t^{(k)}
 +\mu q_t^{(k)}U_t^\mu
 -\mu c(Z_t^\mu)(U_t^\mu)^2.
\end{equation}
Variation of constants in \eqref{eq:local-compensated-error} yields
\begin{align*}
 |\widetilde E_t^{(k)}|
 \le
 e^{-\lambda(t-k)}\mu|U_k^\mu|
 +C\mu\int_k^t
 e^{-\lambda(t-r)}
 \left(
  |U_r^\mu|+|U_r^\mu|^2
 \right)dr.
\end{align*}
Since
$
 E_t^{(k)}
 =\widetilde E_t^{(k)}-\mu U_t^\mu,
$
we obtain
\[
 |Z_t^\mu-\overline Z_t^{(k)}|
 \le
 C\mu^\gamma(V_k+W_k)
 +C\mu^{2\gamma-1}W_k^2.
\]
Combining this estimate with
\eqref{eq:local-overdamped-sup} gives
\begin{equation}\label{eq:Sk-bootstrap}
 S_k
 \le
 C(1+|Z_k^\mu|+N_k)
 +C\mu^\gamma(V_k+W_k)
 +C\mu^{2\gamma-1}W_k^2.
\end{equation}
At the right endpoint, \eqref{eq:local-overdamped-endpoint} gives
\begin{equation}\label{eq:Z-endpoint-recursion}
 |Z_{k+1}^\mu|
 \le
 e^{-\lambda}|Z_k^\mu|
 +C(1+N_k)
 +C\mu^\gamma(V_k+W_k)
 +C\mu^{2\gamma-1}W_k^2.
\end{equation}

\medskip
\noindent
\emph{Step 4: endpoint estimate for the scaled velocity.}
Taking $t=k+1$ in \eqref{eq:Y-unit-window} and repeating the estimates
leading to \eqref{eq:Wk-bootstrap}, we obtain
\begin{equation}\label{eq:V-endpoint-recursion}
 V_{k+1}
 \le
 Ce^{-1/\mu}V_k
 +C\mu^{1-\gamma}(1+S_k)
 +CN_k
 +C\mu^\gamma N_kA_k.
\end{equation}
The factor $e^{-1/\mu}$ expresses the loss of memory of the fast velocity
over one unit time interval.

\medskip
\noindent
\emph{Step 5: closure of the unit-window bootstrap.}
Let
\[
 \varepsilon_\mu=\mu^{2\gamma-1},
 \qquad
 \eta=\min\{1-\gamma,\gamma,2\gamma-1\}>0.
\]
Equations
\eqref{eq:Wk-bootstrap}, \eqref{eq:Ak-bootstrap}, and
\eqref{eq:Sk-bootstrap} form a closed system of inequalities for
$S_k,A_k,W_k$.  Every feedback term contains a positive power of $\mu$:
it is multiplied by one of
\[
 \mu^{1-\gamma},\qquad
 \mu^\gamma,\qquad
 \mu^{2\gamma-1}.
\]

We give the localization argument explicitly.  Choose $\beta>0$ so small
that
$
 4\beta<\eta,
$
and define
\[
 \mathcal G_{k,\mu}
 =
 \{N_k\le\mu^{-\beta}\}.
\]
On $\mathcal G_{k,\mu}$, every coefficient of the form
$\mu^\eta N_k^m$, with $m\le4$, is bounded by
\[
 \mu^{\eta-m\beta}\le\mu^{\eta/2}
\]
for sufficiently small $\mu$.  Apply
\eqref{eq:Wk-bootstrap}--\eqref{eq:Sk-bootstrap} first to the stopped
quantities
\[
 S_k^{(R)}=S_k\wedge R,\qquad
 A_k^{(R)}=A_k\wedge R,\qquad
 W_k^{(R)}=W_k\wedge R.
\]
A continuation argument, starting from the left endpoint of $I_k$, shows
that the terms containing
\[
 \mu^{1-\gamma}S_k^{(R)},\qquad
 \mu^\gamma N_kA_k^{(R)},\qquad
 \mu^{2\gamma-1}(W_k^{(R)})^2
\]
can be absorbed into the left-hand side on
$\mathcal G_{k,\mu}$.  Consequently,
\[
 S_k^{(R)}+A_k^{(R)}+W_k^{(R)}
 \le
 C\bigl(1+|Z_k^\mu|+V_k+N_k\bigr)
 \qquad\text{on }\mathcal G_{k,\mu},
\]
with a constant independent of $R,k$, and $\mu$.

On the complement of $\mathcal G_{k,\mu}$, Fernique's estimate
\eqref{eq:Nk-Fernique} implies that, for every $M>0$,
\begin{equation}\label{eq:bad-event-small}
 \sup_{k\ge0}
 \Pp(\mathcal G_{k,\mu}^c)
 \le C_M\mu^M.
\end{equation}
The deterministic Young estimates for the stopped system grow at most
polynomially in $\mu^{-1}$ and in the stopped quantities.  Hence, by
H\"older's inequality, choosing $M$ sufficiently large in
\eqref{eq:bad-event-small}, and then letting $R\to\infty$, the contribution
of $\mathcal G_{k,\mu}^c$ can be absorbed into the constant.  We conclude
that, for every $p\ge2$,
\begin{equation}\label{eq:local-closed-Lp}
 \norm{S_k}_{L^p}
 +\norm{A_k}_{L^p}
 +\norm{W_k}_{L^p}
 \le
 C_p\left(
  1+\norm{Z_k^\mu}_{L^p}
  +\norm{V_k}_{L^p}
 \right).
\end{equation}

\medskip
\noindent
\emph{Step 6: discrete dissipative recursion.}
Substitute \eqref{eq:local-closed-Lp} into
\eqref{eq:Z-endpoint-recursion} and
\eqref{eq:V-endpoint-recursion}.  Since
\[
 e^{-1/\mu}\longrightarrow0,\qquad
 \mu^{1-\gamma}\longrightarrow0,\qquad
 \mu^\gamma\longrightarrow0,\qquad
 \mu^{2\gamma-1}\longrightarrow0,
\]
we may choose $\mu_0\in(0,1]$, a weight $\vartheta>0$, and
$\theta\in(0,1)$ such that, for all $0<\mu\le\mu_0$,
\begin{align}
 \norm{Z_{k+1}^\mu}_{L^p}
 +\vartheta\norm{V_{k+1}}_{L^p}
 \le{}&
 \theta\left(
  \norm{Z_k^\mu}_{L^p}
  +\vartheta\norm{V_k}_{L^p}
 \right)
 +C_p.
 \label{eq:discrete-dissipative-recursion}
\end{align}
Here the strict inequality $\theta<1$ follows from the contraction
$e^{-\lambda}<1$ in the slow variable, the factor $e^{-1/\mu}$ in the
fast variable, and the fact that every coupling term is multiplied by a
positive power of $\mu$.

Iterating \eqref{eq:discrete-dissipative-recursion} gives
\begin{align}
 \norm{Z_k^\mu}_{L^p}
 +\vartheta\norm{V_k}_{L^p}
 \le
 \theta^k
 \left(
  \norm{Z_0^\mu}_{L^p}
  +\vartheta\norm{V_0}_{L^p}
 \right)
 +C_p\sum_{j=0}^{k-1}\theta^j
 \le C_p,
 \label{eq:endpoint-uniform-bound}
\end{align}
uniformly in $k$ and $0<\mu\le\mu_0$.  Notice that
\[
 V_0=\mu^{1-\gamma}|U_0^\mu|
 \le C|y_0|
\]
for $0<\mu\le1$.

Finally, substituting \eqref{eq:endpoint-uniform-bound} into
\eqref{eq:local-closed-Lp} yields
\[
 \sup_{k\in\mathbb N_0}
 \left(
  \norm{S_k}_{L^p}
  +\norm{A_k}_{L^p}
  +\norm{W_k}_{L^p}
 \right)
 \le C_p.
\]
Therefore,
\[
 \sup_{0<\mu\le\mu_0}\sup_{t\ge0}
 \norm{Z_t^\mu}_{L^p}\le C_p,
\]
and
\[
 \sup_{0<\mu\le\mu_0}\sup_{k\in\mathbb N_0}
 \norm{[Z^\mu]_{\gamma;I_k}}_{L^p}\le C_p.
\]
Since
$
 \norm{U^\mu}_{\infty;I_k}
 =\mu^{\gamma-1}W_k,
$
we also have
\[
 \sup_{0<\mu\le\mu_0}\sup_{t\ge0}
 \norm{U_t^\mu}_{L^p}
 \le C_p\mu^{\gamma-1}.
\]
This proves
\eqref{eq:Zmu-uniform-p}--\eqref{eq:Zmu-uniform-holder}.

For random initial data, the same proof applies after taking the required
moments of $Z_0^\mu$ and $\mu^{1-\gamma}U_0^\mu$ into account.
\end{proof}

\subsection{The compensated position error}

Set
\[
 E_t^\mu=Z_t^\mu-Z_t,
 \qquad
 \widetilde E_t^\mu=E_t^\mu+\mu U_t^\mu.
\]
The compensation by $\mu U^\mu$ cancels the distributional derivative of
the noise.

\begin{proposition}[Uniform strong position estimate]
\label{prop:uniform-strong-error}
Let \Cref{ass:smooth,ass:dissipative} hold.  For every
$\gamma\in(1/2,H)$ and $p\ge2$,
\begin{equation}\label{eq:strong-error-gamma}
 \sup_{t\ge0}\norm{Z_t^\mu-Z_t}_{L^p}
 \le C_{p,\gamma}
 \bigl(\mu^\gamma+\mu^{2\gamma-1}+\mu\bigr)
 \le C_{p,\gamma}\mu^{2\gamma-1}.
\end{equation}
\end{proposition}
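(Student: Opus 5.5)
We compare $Z^\mu$ with the global overdamped solution $Z$, using the compensated error $\widetilde E_t^\mu=E_t^\mu+\mu U_t^\mu$ and repeating Step~3 of the proof of \Cref{prop:uniform-local-estimates} on the whole half-line rather than on a unit window. Subtract \eqref{eq:limit-transformed} from the integrated form of \eqref{eq:slow-manifold-identity}. The noise increments $dB_t^H$ cancel exactly, and $\widetilde E^\mu$ becomes absolutely continuous with
\[
 \frac{d}{dt}\widetilde E_t^\mu
 =a(Z_t^\mu)-a(Z_t)-\mu c(Z_t^\mu)(U_t^\mu)^2,
 \qquad \widetilde E_0^\mu=\mu u_0 .
\]
As in Step~3, define the difference quotient $q_t=-(a(Z_t^\mu)-a(Z_t))/(Z_t^\mu-Z_t)$, and set $q_t=-a'(Z_t)$ when the denominator vanishes. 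By \eqref{eq:a-prime-bounds}, $q_t\in[\lambda,L]$. Writing $E^\mu=\widetilde E^\mu-\mu U^\mu$ gives the linear equation
\[
 \frac{d}{dt}\widetilde E_t^\mu=-q_t\widetilde E_t^\mu+\mu q_tU_t^\mu-\mu c(Z_t^\mu)(U_t^\mu)^2 .
\]

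The second part of \Cref{lem:dissipative-resolvent} bounds the $L^p$ norm of $\widetilde E_t^\mu$ by three contributions, which we control as follows.
\begin{itemize}
\item The initial term is at most $e^{-\lambda t}\mu|u_0|\le C\mu$.
\item The linear velocity term is at most $\int_0^te^{-\lambda(t-s)}L\mu\norm{U_s^\mu}_{L^p}ds\le C\mu\cdot\mu^{\gamma-1}=C\mu^\gamma$, by \eqref{eq:Umu-uniform-p}.
\item The quadratic term is at most $\int_0^te^{-\lambda(t-s)}\norm{c}_\infty\mu\norm{U_s^\mu}_{L^{2p}}^2ds\le C\mu^{2\gamma-1}$, by \eqref{eq:Umu-uniform-p} with $2p$ in place of $p$.
\end{itemize}
The kernel $e^{-\lambda(t-s)}$ integrates to at most $\lambda^{-1}$, so none of these constants grows with $t$. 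This exponential weighting is what prevents the false accumulation $t\mu^{2\gamma-1}$.

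It remains to return to $E^\mu$. We have $\norm{E_t^\mu}_{L^p}\le\norm{\widetilde E_t^\mu}_{L^p}+\mu\norm{U_t^\mu}_{L^p}$, and the last term is again at most $C\mu^\gamma$ by \eqref{eq:Umu-uniform-p}. This proves the first inequality in \eqref{eq:strong-error-gamma}, with $C$ independent of $t$. For the second inequality, note that $0<\mu\le1$ and $2\gamma-1<\gamma<1$, so $\mu^\gamma+\mu\le2\mu^{2\gamma-1}$. For $\mu_0<\mu\le1$ the bound holds trivially after enlarging $C$, using the uniform moment bounds on $Z^\mu$ and $Z$; alternatively, we simply restrict to $\mu\le\mu_0$ as in \Cref{prop:uniform-local-estimates}.

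The only delicate point is the uniform-in-time fast-velocity bound \eqref{eq:Umu-uniform-p}. It is needed in $L^{2p}$, and its constant must not depend on $t$. We take it from \Cref{prop:uniform-local-estimates}, where it was established by the unit-window bootstrap. Given that input, the argument reduces to the deterministic one-dimensional resolvent estimate above. Measurability of $q_t$ is immediate, since it is a Borel function of $(Z_t^\mu,Z_t)$.
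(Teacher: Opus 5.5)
Your proposal is correct and follows the paper's proof essentially step for step: the compensated error $\widetilde E^\mu=E^\mu+\mu U^\mu$, the difference-quotient coefficient $q_t\in[\lambda,L]$, variation of constants via the dissipative resolvent lemma with the $L^p$ and $L^{2p}$ fast-velocity bounds, and the final return to $E^\mu$. (Of your two options for $\mu_0<\mu\le1$, only restricting to $\mu\le\mu_0$ is supported by what the paper proves, since the uniform-in-time moment bounds on $Z^\mu$ are established only for $\mu\le\mu_0$.)
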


\begin{proof}
Using \eqref{eq:slow-manifold-identity} and
\eqref{eq:limit-transformed}, we obtain the ordinary differential identity
\begin{equation}\label{eq:Etilde-diff}
 d\widetilde E_t^\mu
 =\bigl(a(Z_t^\mu)-a(Z_t)
        -\mu c(Z_t^\mu)(U_t^\mu)^2\bigr)dt.
\end{equation}
For $E_t^\mu\ne0$, define
\[
 q_t^\mu
 =-\frac{a(Z_t^\mu)-a(Z_t)}{Z_t^\mu-Z_t};
\]
when the denominator vanishes set $q_t^\mu=-a'(Z_t)$.  By the mean value
theorem and \eqref{eq:a-prime-bounds},
$\lambda\le q_t^\mu\le L$.  Since
$E_t^\mu=\widetilde E_t^\mu-\mu U_t^\mu$,
\eqref{eq:Etilde-diff} becomes
\[
 \frac d{dt}\widetilde E_t^\mu
 =-q_t^\mu\widetilde E_t^\mu
   +\mu q_t^\mu U_t^\mu
   -\mu c(Z_t^\mu)(U_t^\mu)^2.
\]
Because $E_0^\mu=0$,
$\widetilde E_0^\mu=\mu u_0$.  Variation of constants and
\Cref{lem:dissipative-resolvent} give
\begin{align*}
 \norm{\widetilde E_t^\mu}_{L^p}
 &\le e^{-\lambda t}\mu\norm{u_0}_{L^p}
 +C\mu\int_0^t e^{-\lambda(t-s)}
    \left(\norm{U_s^\mu}_{L^p}
          +\norm{U_s^\mu}_{L^{2p}}^2\right)ds\\
 &\le C\left(\mu+\mu^\gamma+\mu^{2\gamma-1}\right),
\end{align*}
where \eqref{eq:Umu-uniform-p} was used in the last line.  Finally,
$E_t^\mu=\widetilde E_t^\mu-\mu U_t^\mu$ and
$\sup_t\norm{\mu U_t^\mu}_{L^p}\le C\mu^\gamma$.  This proves the first
inequality in \eqref{eq:strong-error-gamma}.  Since
$0<2\gamma-1<\gamma<1$, the second follows for $0<\mu\le1$.
\end{proof}

\section{Uniform Malliavin estimates and total variation}
\label{sec:malliavin}

The Malliavin derivative with respect to the canonical isonormal process of
$B^H$ satisfies, for $0\le r\le t$,
\begin{equation}\label{eq:DZ-integral}
 \D_rZ_t
 =1+\int_r^t a'(Z_s)\D_rZ_s\,ds.
\end{equation}
Consequently,
\[
 \D_rZ_t
 =\1_{\{r\le t\}}J_{t,r},
 \qquad
 J_{t,r}=\exp\left\{\int_r^t a'(Z_s)\,ds\right\}.
\]
The dissipativity bounds imply
\begin{equation}\label{eq:J-bounds}
 e^{-L(t-r)}\le J_{t,r}\le e^{-\lambda(t-r)},
 \qquad 0\le r\le t.
\end{equation}

\begin{lemma}[Uniform limiting Malliavin bounds]
\label{lem:limit-Malliavin-uniform}
For every $p\ge1$,
\begin{align}
 \sup_{t\ge0}\norm{\D Z_t}_{L^p(\Omega;\cH)}
 &<\infty,
 \label{eq:DZ-upper-uniform}\\
 \sup_{t\ge0}\norm{\D^2Z_t}_{L^p(\Omega;\cH^{\otimes2})}
 &<\infty.
 \label{eq:D2Z-upper-uniform}
\end{align}
Moreover, for every $t_0>0$ and $q\ge1$,
\begin{equation}\label{eq:inverse-cov-uniform}
 \sup_{t\ge t_0}
 \E\left[\norm{\D Z_t}_{\cH}^{-q}\right]
 <\infty.
\end{equation}
In fact, the inverse bound is deterministic.
\end{lemma}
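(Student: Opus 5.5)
The plan rests on the explicit representation $\D_rZ_t=\1_{\{r\le t\}}J_{t,r}$ together with the two-sided deterministic bounds \eqref{eq:J-bounds}, and on the integral formula \eqref{eq:H-inner-product} for the $\cH$-norm of nonnegative functions. Every estimate then reduces to a computation with deterministic exponential kernels, except the second derivative, which needs the moment bounds of \Cref{lem:limit-uniform-moments}.

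\emph{First derivative.} Since $0\le\D_rZ_t\le e^{-\lambda(t-r)}\1_{[0,t]}(r)$ and the kernel $|r-s|^{2H-2}$ is positive, monotonicity of \eqref{eq:H-inner-product} gives
\[
 \norm{\D Z_t}_{\cH}^2\le\alpha_H\int_0^t\int_0^t e^{-\lambda(t-r)}e^{-\lambda(t-s)}|r-s|^{2H-2}\,dr\,ds .
\]
Substitute $r'=t-r$, $s'=t-s$ and extend the domain to $[0,\infty)^2$. Integrating first in $s'$ over $\R$ gives at most $C\lambda^{1-2H}$, because $|x|^{2H-2}e^{-\lambda|x|}$ is integrable; $2H-2>-1$ handles the local singularity. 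What remains is $\int_0^\infty e^{-\lambda r'}\,dr'<\infty$. This bound is deterministic and independent of $t$, which gives \eqref{eq:DZ-upper-uniform}.

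\emph{Inverse covariance.} For $t\ge t_0$, set $\delta=t_0\wedge1$ and keep only the recent window $[t-\delta,t]$. Positivity of the integrand and the lower bound in \eqref{eq:J-bounds} give
\[
 \norm{\D Z_t}_{\cH}^2\ge\alpha_H e^{-2L\delta}\int_{t-\delta}^t\int_{t-\delta}^t|r-s|^{2H-2}\,dr\,ds=e^{-2L\delta}\delta^{2H},
\]
since the double integral equals $\delta^{2H}/(2H-1)$. This is a deterministic positive lower bound independent of $t\ge t_0$, so \eqref{eq:inverse-cov-uniform} follows for every $q$.

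\emph{Second derivative.} Differentiate \eqref{eq:DZ-integral} once more. For $r_1,r_2\le t$ this gives
\[
 \D^2_{r_1,r_2}Z_t=\int_{r_1\vee r_2}^t J_{t,s}\,a''(Z_s)\,J_{s,r_1}J_{s,r_2}\,ds .
\]
Boundedness of $a''$ and \eqref{eq:J-bounds} then yield
\[
 |\D^2_{r_1,r_2}Z_t|\le C\int_{r_1\vee r_2}^t e^{-\lambda(t-s)}e^{-\lambda(s-r_1)}e^{-\lambda(s-r_2)}\,ds\le Ce^{-\lambda(t-r_1)}e^{-\lambda(t-r_2)}\cdot(\text{const}),
\]
using $(t-s)+2s-r_1-r_2\ge(t-r_1)+(t-r_2)-(t-s)$, after absorbing the factor into the integral over $s$. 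More carefully, bound by $C\,e^{-\lambda(t-r_1\vee r_2)}e^{-\lambda|r_1-r_2|}(t-r_1\vee r_2)$, which is dominated by $C_\epsilon e^{-(\lambda-\epsilon)(t-r_1)}e^{-(\lambda-\epsilon)(t-r_2)}$ up to a harmless factor. The $\cH^{\otimes2}$-norm of this majorant factorizes into the square of the one-dimensional computation above, so it is bounded uniformly in $t$ and deterministically; \eqref{eq:D2Z-upper-uniform} follows.

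\emph{Main obstacle.} The only delicate point is the rigorous justification that $Z_t\in\mathbb D^{2,p}$ with the stated formulas; the Gaussian computations themselves are elementary. I would obtain this from the standard differentiability of solutions of additive-noise equations with $C^3$ drift, using that $a'',a'''$ are bounded by \Cref{ass:dissipative}. Domination of the $\cH$-norms by the deterministic majorants requires care: \eqref{eq:H-inner-product} is stated for nonnegative functions, and $\D^2Z_t$ may change sign. I would handle this using the fact that for $H>1/2$ the $\cH$-norm of $f$ is bounded by that of $|f|$, which holds because $|r-s|^{2H-2}\ge0$, and likewise in $\cH^{\otimes2}$.
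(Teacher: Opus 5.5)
Your proposal is essentially the paper's proof. You use the same representation $\D_rZ_t=\1_{\{r\le t\}}J_{t,r}$ with \eqref{eq:J-bounds}, and the same deterministic exponential majorants inserted into the integral form of the $\cH$ and $\cH^{\otimes2}$ norms. Your remark that $\norm{f}_{\cH}\le\norm{|f|}_{\cH}$ for $H>1/2$ is exactly what the paper's appeal to Tonelli's theorem relies on. You also use the same recent window $[t-\delta,t]$ with $J_{t,r}\ge e^{-L\delta}$ to get the deterministic lower bound $e^{-2L\delta}\delta^{2H}$.

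One step is misstated, though it is easily repaired. In the second-derivative estimate, the majorants $Ce^{-\lambda(t-r_1)}e^{-\lambda(t-r_2)}$ and $C_\epsilon e^{-(\lambda-\epsilon)(t-r_1)}e^{-(\lambda-\epsilon)(t-r_2)}$ are false for small $\epsilon$. On the diagonal $r_1=r_2=r$ the integral equals $\lambda^{-1}(e^{-\lambda(t-r)}-e^{-2\lambda(t-r)})$, which decays only at rate $\lambda$ in $t-r$, not $2(\lambda-\epsilon)$. The correct computation gives $\lambda^{-1}e^{-\lambda(t-r_1\wedge r_2)}\le\lambda^{-1}e^{-\lambda(t-r_1)/2}e^{-\lambda(t-r_2)/2}$. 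This holds because $t-r_1\wedge r_2=(t-r_1\vee r_2)+|r_1-r_2|\ge\frac12[(t-r_1)+(t-r_2)]$. Any positive rate suffices for the factorization of the $\cH^{\otimes2}$ norm into the square of the one-dimensional computation. Your argument therefore goes through with $\lambda/2$ in place of $\lambda$, which is the unspecified constant $c$ in the paper.

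Two cosmetic points. First, the double integral over the window is $\delta^{2H}/(H(2H-1))$, not $\delta^{2H}/(2H-1)$; your final value $e^{-2L\delta}\delta^{2H}$ is nevertheless right. Second, in the first-derivative step the weight is $e^{-\lambda s'}$ rather than $e^{-\lambda|r'-s'|}$, so integrability of $|x|^{2H-2}e^{-\lambda|x|}$ does not apply directly. The cleanest justification is $e^{-\lambda(r'+s')}\le e^{-\lambda r'/2}e^{-\lambda|r'-s'|/2}$, which makes the double integral factor.
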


\begin{proof}
Using \eqref{eq:H-inner-product} and the upper estimate in
\eqref{eq:J-bounds},
\begin{align*}
 \norm{\D Z_t}_{\cH}^2
 &\le\alpha_H\int_0^t\int_0^t
 e^{-\lambda(t-r)}e^{-\lambda(t-s)}
 |r-s|^{2H-2}\,dr\,ds\\
 &\le\alpha_H\int_0^\infty\int_0^\infty
 e^{-\lambda(u+v)}|u-v|^{2H-2}\,du\,dv<\infty.
\end{align*}
This proves \eqref{eq:DZ-upper-uniform}.

Let $r_*=r_1\vee r_2$.  Differentiating
\eqref{eq:DZ-integral} once more gives
\[
 \D_{r_1,r_2}^2Z_t
 =\1_{\{r_*\le t\}}
 \int_{r_*}^t J_{t,s}a''(Z_s)
       \D_{r_1}Z_s\D_{r_2}Z_s\,ds.
\]
The integrand is bounded by a product of exponentially decaying kernels.
Substitution into the fourfold representation of the
$\cH^{\otimes2}$ norm, followed by Tonelli's theorem, reduces the estimate to
finite integrals of the form
\[
 \int_{\R_+^4}e^{-c(u_1+u_2+u_3+u_4)}
 |u_1-u_2|^{2H-2}|u_3-u_4|^{2H-2}\,d\bm u.
\]
This proves \eqref{eq:D2Z-upper-uniform}.

For the lower bound, put
\[
 \delta=\min\{1,t_0/2\}.
\]
For every $t\ge t_0$ and $r\in[t-\delta,t]$,
$J_{t,r}\ge e^{-L\delta}$.  The kernel in
\eqref{eq:H-inner-product} is nonnegative.  Therefore
\begin{align}
 \norm{\D Z_t}_{\cH}^2
 \ge \alpha_H e^{-2L\delta}
 \int_{t-\delta}^t\int_{t-\delta}^t
 |r-s|^{2H-2}\,dr\,ds
 =e^{-2L\delta}\delta^{2H}.
 \label{eq:recent-window-lower}
\end{align}
Raising this deterministic inequality to a negative power proves
\eqref{eq:inverse-cov-uniform}.
\end{proof}

\begin{remark}[Recent noise versus elapsed time]\label{rem:recent-window}
The identity
\[
 \norm{\1_{[t-\delta,t]}}_{\cH}^2=\delta^{2H}
\]
depends only on the window length and not on the absolute time $t$.  This is
the exact point at which the present proof departs from a bound based on
$\norm{\1_{[0,t]}}_{\cH}^2=t^{2H}$.  Choosing a fixed recent window removes
the factor $t^{-H}$ from all long-time estimates.
\end{remark}

We next compare the first derivatives.  The derivative of the original
kinetic system is a linear Young system.  For $r\le t$,
\[
\begin{cases}
 d(\D_rX_t^\mu)=\D_rY_t^\mu\,dt,\\[1mm]
 \mu\,d(\D_rY_t^\mu)
 =\bigl(b'(X_t^\mu)\D_rX_t^\mu-\D_rY_t^\mu\bigr)dt
  +\sigma'(X_t^\mu)\D_rX_t^\mu\,dB_t^H,
\end{cases}
\]
with the impulse condition
$\mu\D_rY_r^\mu=\sigma(X_r^\mu)$ and
$\D_rX_r^\mu=0$.  The formula is understood in the canonical Malliavin
sense; it follows first for Cameron--Martin directions and then by closure.

In transformed variables, define
\[
 V_{t,r}^\mu=\D_rU_t^\mu,\qquad
 P_{t,r}^\mu=\D_rZ_t^\mu.
\]
Differentiating \eqref{eq:kinetic-transformed} yields
\begin{equation}\label{eq:DUmu-system}
\begin{cases}
 dP_{t,r}^\mu=V_{t,r}^\mu\,dt,\\
 \mu\,dV_{t,r}^\mu
 =\bigl[
   a'(Z_t^\mu)P_{t,r}^\mu-V_{t,r}^\mu
   -\mu c'(Z_t^\mu)P_{t,r}^\mu(U_t^\mu)^2
   -2\mu c(Z_t^\mu)U_t^\mu V_{t,r}^\mu
  \bigr]dt,
\end{cases}
\end{equation}
for $t>r$, with
\begin{equation}\label{eq:Malliavin-impulse}
 P_{r,r}^\mu=0,\qquad \mu V_{r,r}^\mu=1.
\end{equation}

\begin{lemma}[Fast derivative boundary layer]
\label{lem:fast-derivative}
Let $\gamma\in(1/2,H)$ and $p\ge2$.  Under
\Cref{ass:smooth,ass:dissipative},
\begin{align}
 \sup_{t\ge0}
 \norm{\mu\,\D U_t^\mu}_{L^p(\Omega;\cH)}
 &\le C_{p,\gamma}\bigl(\mu^\gamma+\mu\bigr),
 \label{eq:muDU-bound}\\
 \sup_{t\ge0}
 \norm{\D Z_t^\mu}_{L^p(\Omega;\cH)}
 &\le C_{p,\gamma}.
 \label{eq:DZmu-bound}
\end{align}
\end{lemma}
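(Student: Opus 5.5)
The plan is to work pathwise with the linear system \eqref{eq:DUmu-system}--\eqref{eq:Malliavin-impulse} for fixed Malliavin time $r$. Both bounds should follow from explicit pointwise bounds on the kernels $V^\mu_{t,r}$ and $P^\mu_{t,r}$, fed into the $\cH$-norm formula \eqref{eq:H-inner-product}. Since the system is linear with random coefficients, the first step is to rewrite the velocity equation as
\[
 \mu\,dV_{t,r}^\mu=-\bigl(1+2\mu c(Z_t^\mu)U_t^\mu\bigr)V_{t,r}^\mu\,dt
 +\bigl(a'(Z_t^\mu)-\mu c'(Z_t^\mu)(U_t^\mu)^2\bigr)P_{t,r}^\mu\,dt .
\]
Two random quantities govern the estimates: the damping rate $\kappa_t=1+2\mu c(Z^\mu_t)U^\mu_t$ and the forcing coefficient $a'-\mu c'(U^\mu)^2$. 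By \eqref{eq:Umu-uniform-p}, $\mu U^\mu=O_{L^p}(\mu^\gamma)$ and $\mu(U^\mu)^2=O_{L^p}(\mu^{2\gamma-1})$. Hence, on a good event where $\mu\norm{U^\mu}_{\infty;I_k}\le 1/(4\norm{c}_\infty)$ on the relevant unit windows, we have $\kappa_t\ge1/2$. The probability of the complementary event is $O(\mu^M)$ for every $M$, by the Fernique-type localization already used in Step 5 of \Cref{prop:uniform-local-estimates}.

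On the good event we treat $(P,V)$ as a two-time-scale linear system. Variation of constants for $V$ gives
\[
 \mu V_{t,r}^\mu = e^{-\int_r^t\kappa_s ds/\mu}
 +\int_r^t e^{-\int_s^t\kappa_u du/\mu}\bigl(a'(Z^\mu_s)-\mu c'(U^\mu_s)^2\bigr)P^\mu_{s,r}\,ds .
\]
For $P$ we use a compensated quantity, as in \Cref{prop:uniform-strong-error}. Set $\widetilde P_{t,r}=P^\mu_{t,r}+\mu V^\mu_{t,r}$. It satisfies
\[
 \frac{d}{dt}\widetilde P_{t,r}=a'(Z^\mu_t)P_{t,r}-\mu c'(Z^\mu_t)(U^\mu_t)^2P_{t,r}-2\mu c(Z^\mu_t)U^\mu_tV_{t,r},
 \qquad \widetilde P_{r,r}=1 .
\]
Substituting $P=\widetilde P-\mu V$ gives a scalar equation $\dot{\widetilde P}=a'\widetilde P+(\text{small})\,\widetilde P+\ldots$. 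The strict bound $a'\le-\lambda$ then yields, via \Cref{lem:dissipative-resolvent} and a Gronwall argument on unit windows, $|\widetilde P_{t,r}|\le C e^{-\lambda(t-r)/2}$ with random but moment-controlled constants. From this we get $|P_{t,r}|\le Ce^{-\lambda(t-r)/2}$ and
\[
 |\mu V_{t,r}|\le e^{-(t-r)/(2\mu)}+C\mu\, e^{-\lambda(t-r)/2}.
\]
The perturbative terms carry factors $\mu^{2\gamma-1}N_k^m$ or $\mu^\gamma N_k$, and these are absorbed exactly as in Step 5 of \Cref{prop:uniform-local-estimates}.

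Plugging these kernel bounds into \eqref{eq:H-inner-product} gives \eqref{eq:muDU-bound} and \eqref{eq:DZmu-bound}:
\begin{itemize}[leftmargin=2em]
\item The boundary-layer part $e^{-(t-r)/(2\mu)}$ contributes $\alpha_H\iint e^{-(u+v)/(2\mu)}|u-v|^{2H-2}\,du\,dv=C\mu^{2H}$, hence a norm of order $\mu^H\le\mu^\gamma$.
\item The slow part contributes $C\mu$ times the finite integral already computed in \Cref{lem:limit-Malliavin-uniform}.
\item For $P$ the same exponential kernel gives a constant bound, uniform in $t$.
\end{itemize}
Moments in $L^p(\Omega;\cH)$ follow by Minkowski's inequality, with the random constants summed over unit windows using exponentially decaying weights. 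On the bad event we use crude deterministic bounds for the linear system, which grow at most like $\exp(C\mu^{-1}\int|U|)$ on a window; these are multiplied by a probability $O(\mu^M)$ via Hölder's inequality.

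The main obstacle is this bad-event control. The Gronwall bound for a linear system whose damping coefficient $\kappa$ may become negative is exponential, not polynomial, in $\mu^{-1}\norm{U^\mu}_\infty$. It must be compensated by the Gaussian tails of $N_k$. I would first try to restrict to windows $I_k$ on which $\mu^\gamma N_k$ is small, and handle the remaining windows by showing that exponential moments $\E\exp(C\mu^{\gamma-1}\mu^{1-\gamma}W_k)$ are finite. If that fails, a fallback is to work with the original variables $(\D X^\mu,\D Y^\mu)$. Their equation has constant damping $-1/\mu$ and no quadratic term, which avoids the sign issue at the cost of a Young integral against $B^H$; that integral would be controlled by \Cref{lem:variable-convolution}.
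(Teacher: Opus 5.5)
Your architecture is the paper's. Both use the impulse kernel $e^{-(t-r)/\mu}$ with $\cH$-norm of order $\mu^H$, variation of constants for $\mu V^\mu$, a dissipative Gronwall argument for $P^\mu$ based on $a'\le-\lambda$, and a good event on which $1+2\mu c(Z^\mu)U^\mu\ge\frac12$, whose complement has probability $O(\mu^M)$. Your compensated variable $\widetilde P=P^\mu+\mu V^\mu$ is a cleaner version of the paper's remark that the fast part of $V^\mu$ integrates to $1-e^{-(t-r)/\mu}$.

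The gap is the step you yourself flag and then leave open, and the route you sketch for it does not work as stated. On a bad window the damping only gives the factor $\exp\bigl(2\norm{c}_\infty\int_{I_k}|U^\mu|\bigr)\le\exp(C\mu^{\gamma-1}W_k)$. \Cref{prop:uniform-local-estimates} provides only polynomial moments of $W_k$, so H\"older's inequality against an $O(\mu^M)$ probability cannot absorb this factor. Moreover, $\norm{\D Z_t^\mu}_{\cH}$ involves every window in $[0,t]$. The probability that at least one of them is bad is of order $t\mu^M$, which is not uniform in $t$. A bad window also multiplies the propagator of the linear system rather than adding an error, so summing random constants with exponential weights does not apply. (The paper passes over this point with the same $O(\mu^M)$ assertion.)

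The missing observation is that no good event is needed. Since $c(Z^\mu)=\sigma'(X^\mu)$ and $U^\mu=\dot X^\mu/\sigma(X^\mu)$, one has $c(Z^\mu_\ell)U^\mu_\ell=\frac{d}{d\ell}\log\sigma(X^\mu_\ell)$. Hence, deterministically for all $s\le t$,
\[
 \exp\Bigl(-\frac1\mu\int_s^t\bigl(1+2\mu c(Z^\mu_\ell)U^\mu_\ell\bigr)\,d\ell\Bigr)
 =e^{-(t-s)/\mu}\,\frac{\sigma(X^\mu_s)^2}{\sigma(X^\mu_t)^2}
 \le\Bigl(\frac{\overline\sigma}{\underline\sigma}\Bigr)^2e^{-(t-s)/\mu}.
\]
This identity is exactly why your fallback system in the original coordinates has constant damping. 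The fallback is not a remedy on its own, however: in those coordinates $b'$ has no sign, and the dissipativity $a'\le-\lambda$ is hidden inside the Young term $\sigma'(X^\mu)\D_rX^\mu\,dB^H$.

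Even with the damping settled, your bound $|\widetilde P_{t,r}|\le Ce^{-\lambda(t-r)/2}$ ``with random but moment-controlled constants,'' uniformly in $t-r$, is asserted rather than proved. The slow rate is $a'(Z^\mu)-\mu c'(Z^\mu)(U^\mu)^2$. Here $c'$ has no sign, and $\mu(U^\mu)^2$ is only $O_{L^p}(\mu^{2\gamma-1})$, not pointwise small. A pathwise Gronwall bound therefore contains the factor $\exp\bigl(\norm{c'}_\infty\int_r^t\mu(U^\mu_s)^2\,ds\bigr)$. To control it you would need exponential moments of $\int_r^t\mu(U^\mu_s)^2\,ds$ growing at a rate below $\lambda$, uniformly over arbitrarily long intervals. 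Step~5 of \Cref{prop:uniform-local-estimates} is a single-window absorption and does not give this.

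If you instead treat the term as a forcing in $L^p$, as the paper does, you bound $\norm{P^\mu}_{L^p}$ by $\norm{P^\mu}_{L^{2p}}$. The moment index then doubles at each step and the Gronwall argument does not close. A complete proof needs an additional ingredient here. One candidate is exponential integrability of $\sum_k\mu^{2\gamma-1}W_k^2$ across many windows, obtained from a pathwise form of the unit-window bootstrap combined with Gaussian concentration for the local H\"older norms $N_k$.
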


\begin{proof}
The impulse in \eqref{eq:Malliavin-impulse} generates the leading kernel
$
 K_{t,r}^\mu=e^{-(t-r)/\mu}\1_{\{r\le t\}}.
$
By the scaling of the fBm Hilbert space $\norm{K_{t,\cdot}^\mu}_{\cH}^2.$
To see the first inequality directly, use \eqref{eq:H-inner-product}, set
$u=(t-r)/\mu$, $v=(t-s)/\mu$, and extend the resulting integral to
$\R_+^2$.

Variation of constants in \eqref{eq:DUmu-system} writes
\[
 \mu V_{t,r}^\mu
 =K_{t,r}^\mu+\mathcal A_{t,r}^\mu+\mathcal C_{t,r}^\mu,
\]
where $\mathcal A$ contains the term $a'(Z^\mu)P^\mu$ and
$\mathcal C$ contains the two $c$-terms.  The exponential kernel in the
variation-of-constants formula has mass $\mu$.  Using
\eqref{eq:Umu-uniform-p}, the boundedness of the coefficients, and
Minkowski's inequality gives
\begin{align*}
 \norm{\mathcal A_{t,\cdot}^\mu}_{L^p(\cH)}
 &\le C\mu\sup_{s\le t}
       \norm{P_{s,\cdot}^\mu}_{L^p(\cH)},\\
 \norm{\mathcal C_{t,\cdot}^\mu}_{L^p(\cH)}
 &\le C\mu^{2\gamma-1}
       \sup_{s\le t}\norm{P_{s,\cdot}^\mu}_{L^{2p}(\cH)}
   +C\mu^\gamma
       \sup_{s\le t}\norm{\mu V_{s,\cdot}^\mu}_{L^{2p}(\cH)}.
\end{align*}
The second line uses
$\mu\norm{U^\mu}_{L^{4p}}^2\le C\mu^{2\gamma-1}$ and
$\mu\norm{U^\mu}_{L^{4p}}\le C\mu^\gamma$.

Since $P_{t,r}^\mu=\int_r^tV_{s,r}^\mu ds$, the fast exponential part
integrates to $1-e^{-(t-r)/\mu}$ and the dissipative slow part is controlled
on consecutive unit windows.  A two-variable Gronwall argument, first on
$[k,k+1]$ and then iterated using $a'\le-\lambda$, yields
\[
 \sup_{t\ge0}\norm{P_{t,\cdot}^\mu}_{L^p(\cH)}\le C
\]
and, after substitution,
\[
 \sup_{t\ge0}\norm{\mu V_{t,\cdot}^\mu}_{L^p(\cH)}
 \le C(\mu^\gamma+\mu).
\]
The last term in \eqref{eq:DUmu-system} is treated as a perturbation
of the fast damping. Indeed,
\[
 -V_{t,r}^\mu
 -2\mu c(Z_t^\mu)U_t^\mu V_{t,r}^\mu
 =
 -\bigl(1+2\mu c(Z_t^\mu)U_t^\mu\bigr)V_{t,r}^\mu.
\]
By Proposition~\ref{prop:uniform-local-estimates},
\[
 \sup_{t\ge0}
 \|2\mu c(Z_t^\mu)U_t^\mu\|_{L^q}
 \le C_q\mu^\gamma.
\]
More precisely, if
$
 W_k=\mu^{1-\gamma}
 \|U^\mu\|_{\infty;[k,k+1]},
$
then
\[
 2\mu\|c(Z^\mu)U^\mu\|_{\infty;[k,k+1]}
 \le C\mu^\gamma W_k.
\]
Define
\[
 \mathcal G_{k,\mu}
 =
 \left\{
 C\mu^\gamma W_k\le\frac12
 \right\}.
\]
On $\mathcal G_{k,\mu}$, the random damping coefficient satisfies
\[
 1+2\mu c(Z_t^\mu)U_t^\mu\ge\frac12,
 \qquad t\in[k,k+1],
\]
and the corresponding fast resolvent is bounded by
\[
 \exp\left\{
 -\frac1\mu\int_s^t
 \bigl(1+2\mu c(Z_\ell^\mu)U_\ell^\mu\bigr)d\ell
 \right\}
 \le e^{-(t-s)/(2\mu)}.
\]
Thus the random coefficient can be absorbed into the fast damping,
and all exponential-kernel estimates remain valid with a modified
constant.

Furthermore, for every $M>0$, choosing $q$ sufficiently large and
using Markov's inequality gives
\[
 \sup_{k\ge0}
 \Pp(\mathcal G_{k,\mu}^c)
 \le C_M\mu^M.
\]
Hence the contribution of the complement is smaller than every
prescribed algebraic power of $\mu$. This closes the simultaneous
estimates for $P^\mu=DZ^\mu$ and $V^\mu=DU^\mu$, and proves
\eqref{eq:muDU-bound} and \eqref{eq:DZmu-bound}.
\end{proof}

\begin{proposition}[Uniform first Malliavin derivative error]
\label{prop:uniform-D-error}
For every $\gamma\in(1/2,H)$ and $p\ge2$,
\begin{equation}\label{eq:D-error-gamma}
 \sup_{t\ge0}
 \norm{\D Z_t^\mu-\D Z_t}_{L^p(\Omega;\cH)}
 \le C_{p,\gamma}\mu^{2\gamma-1}.
\end{equation}
\end{proposition}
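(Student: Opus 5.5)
We mimic the proof of \Cref{prop:uniform-strong-error} at the level of the Malliavin derivative, working with a compensated derivative error. For fixed $r\ge0$ and $t\ge r$ set
\[
 \mathcal E_{t,r}=P_{t,r}^\mu-\D_rZ_t,\qquad
 \widetilde{\mathcal E}_{t,r}=\mathcal E_{t,r}+\mu V_{t,r}^\mu .
\]
The first step is to integrate the second line of \eqref{eq:DUmu-system} from $r$ to $t$, using the impulse $\mu V_{r,r}^\mu=1$ and $P_{r,r}^\mu=0$. Since $dP^\mu=V^\mu dt$, this gives
\[
 P_{t,r}^\mu+\mu V_{t,r}^\mu
 =1+\int_r^t\Bigl(a'(Z_s^\mu)P_{s,r}^\mu
 -\mu c'(Z_s^\mu)P_{s,r}^\mu(U_s^\mu)^2
 -2\mu c(Z_s^\mu)U_s^\mu V_{s,r}^\mu\Bigr)ds .
\]
Subtracting \eqref{eq:DZ-integral} cancels the constant $1$; this is the derivative analogue of the cancellation of $dB^H$. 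We then write $a'(Z^\mu)P^\mu-a'(Z)\D_rZ=a'(Z^\mu)\mathcal E+(a'(Z^\mu)-a'(Z))\D_rZ$ and $\mathcal E=\widetilde{\mathcal E}-\mu V^\mu$. The result is the linear ODE
\[
 \tfrac{d}{dt}\widetilde{\mathcal E}_{t,r}
 =a'(Z_t^\mu)\widetilde{\mathcal E}_{t,r}+\Phi_{t,r},
\]
with $\widetilde{\mathcal E}_{r,r}=1$ and
\[
 \Phi_{t,r}=-a'(Z^\mu_t)\mu V_{t,r}^\mu+(a'(Z_t^\mu)-a'(Z_t))J_{t,r}-\mu c'P^\mu(U^\mu)^2-2\mu cU^\mu V^\mu .
\]

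The initial value $1$ is a problem at first sight. However, since $P_{r,r}^\mu-\D_rZ_r=-1$, the correct statement is $\widetilde{\mathcal E}_{r,r}=\mu V_{r,r}^\mu+\mathcal E_{r,r}=1-1=0$, provided we adopt the convention $\D_rZ_r=1$ in \eqref{eq:DZ-integral}. So the compensated error starts at zero, and variation of constants with $\exp\int_s^ta'(Z^\mu)\le e^{-\lambda(t-s)}$ gives
\[
 |\widetilde{\mathcal E}_{t,r}|\le\int_r^te^{-\lambda(t-s)}|\Phi_{s,r}|\,ds .
\]
We then take the $\cH$-norm in $r$ and the $L^p$-norm, using Minkowski's inequality in the $ds$ integral after writing it as $\int_0^t e^{-\lambda(t-s)}\1_{r\le s}|\Phi_{s,r}|ds$. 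This reduces the problem to the bound $\sup_s\|\Phi_{s,\cdot}\|_{L^p(\cH)}\le C\mu^{2\gamma-1}$, in the spirit of \Cref{lem:dissipative-resolvent}. The terms of $\Phi$ are handled as follows.
\begin{itemize}
\item The term $\mu V^\mu$ is $O(\mu^\gamma)$ by \eqref{eq:muDU-bound}.
\item For $(a'(Z^\mu)-a'(Z))J$, we use $|a''|\le C$, \Cref{prop:uniform-strong-error} in $L^{2p}$, and the deterministic bound $\|J_{s,\cdot}\|_\cH\le C$ from \eqref{eq:DZ-upper-uniform}. This gives $O(\mu^{2\gamma-1})$.
\item The quadratic term is bounded via Hölder's inequality, $\mu\|U^\mu\|^2_{L^{4p}}\le C\mu^{2\gamma-1}$, and \eqref{eq:DZmu-bound}.
\item The term $2\mu cU^\mu V^\mu$ is bounded by $C\|U^\mu_s\|_{L^{2p}}\|\mu V^\mu_{s,\cdot}\|_{L^{2p}(\cH)}\le C\mu^{\gamma-1}\mu^\gamma=C\mu^{2\gamma-1}$.
\end{itemize}
Finally, $\mathcal E=\widetilde{\mathcal E}-\mu V^\mu$, and the last piece is $O(\mu^\gamma)\le O(\mu^{2\gamma-1})$ by \eqref{eq:muDU-bound}. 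This yields \eqref{eq:D-error-gamma}.

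The main obstacle is the last term, $2\mu cU^\mu V^\mu$. Because $V^\mu$ itself is of size $\mu^{\gamma-1}$ in $\cH$, this term is the product of two fast quantities. Time integration gives no extra smallness unless the $\cH$-norm is taken before integrating in $s$, and the crude product bound above only just reaches $\mu^{2\gamma-1}$. One must also check that exchanging the $\cH$-norm (a double integral with the singular kernel $|r-r'|^{2H-2}$) with the $ds$-integral is legitimate. We would do this via Minkowski's inequality in $\cH$, viewing $\int e^{-\lambda(t-s)}\Phi_{s,\cdot}ds$ as a Bochner integral in $L^p(\Omega;\cH)$. A secondary subtlety is that \eqref{eq:muDU-bound} is only asserted in $L^p$ for finite $p$, so every product is split by Hölder's inequality with doubled exponents; the bounds in \Cref{prop:uniform-local-estimates} and \Cref{lem:fast-derivative} hold for all $p$, so this is harmless.
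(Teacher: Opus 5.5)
Your proposal is correct and is essentially the paper's proof. It uses the same compensated error $\widetilde{\mathcal E}=\mathcal E+\mu\D U^\mu$ and the same linear equation with stable coefficient $a'(Z^\mu)$ after substituting $\mathcal E=\widetilde{\mathcal E}-\mu\D U^\mu$; the same four forcing terms are closed by the same H\"older splittings and \Cref{prop:uniform-strong-error,lem:limit-Malliavin-uniform,lem:fast-derivative}. Your pointwise-in-$r$ check that $\widetilde{\mathcal E}_{r,r}=0$ simply makes explicit what the paper phrases as the impulse being contained in $\mu\D U^\mu$.

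One small point to tidy: do not pass to $|\Phi_{s,r}|$ before taking $\cH$-norms. That step would require bounds on $\norm{\,|\mu\D U_s^\mu|\,}_{\cH}$, whereas \Cref{lem:fast-derivative} controls only the signed norm. Instead, keep the random resolvent $\exp\int_s^t a'(Z_\ell^\mu)\,d\ell\le e^{-\lambda(t-s)}$. It does not depend on $r$, so the $\cH$-norm passes through it and each forcing term factors as an $r$-independent scalar times a Malliavin derivative.
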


\begin{proof}
Differentiate \eqref{eq:Etilde-diff}.  With
\[
 \mathcal E_t^\mu=\D Z_t^\mu-\D Z_t,\qquad
 \widetilde{\mathcal E}_t^\mu
 =\mathcal E_t^\mu+\mu\D U_t^\mu,
\]
we obtain, as an identity in $\cH$,
\[
 \frac d{dt}\widetilde{\mathcal E}_t^\mu
=a'(Z_t^\mu)\mathcal E_t^\mu
  +\bigl(a'(Z_t^\mu)-a'(Z_t)\bigr)\D Z_t
  \nonumber
 -\mu c'(Z_t^\mu)\D Z_t^\mu(U_t^\mu)^2
  -2\mu c(Z_t^\mu)U_t^\mu\D U_t^\mu.
\]
Substitute
$\mathcal E_t^\mu=\widetilde{\mathcal E}_t^\mu-\mu\D U_t^\mu$.
The first term then gives the stable coefficient
$a'(Z_t^\mu)\widetilde{\mathcal E}_t^\mu$.
Variation of constants and $a'\le-\lambda$ yield
\begin{align}
 \norm{\widetilde{\mathcal E}_t^\mu}_{L^p(\cH)}
 \le{}&e^{-\lambda t}
       \norm{\widetilde{\mathcal E}_0^\mu}_{L^p(\cH)}
 +C\int_0^t e^{-\lambda(t-s)}
       \Bigl[
       \norm{E_s^\mu}_{L^{2p}}
       \norm{\D Z_s}_{L^{2p}(\cH)}
 \nonumber\\
 &\quad+\norm{\mu\D U_s^\mu}_{L^p(\cH)}
 +\mu\norm{U_s^\mu}_{L^{4p}}^2
       \norm{\D Z_s^\mu}_{L^{2p}(\cH)}
 \nonumber\\
 &\quad+\norm{U_s^\mu}_{L^{4p}}
       \norm{\mu\D U_s^\mu}_{L^{2p}(\cH)}
       \Bigr]ds .
 \label{eq:DEtilde-resolvent}
\end{align}

The diagonal Malliavin impulse at $t=r$, represented by
\[
 K_{t,r}^\mu
 =e^{-(t-r)/\mu}\mathbf 1_{\{r\le t\}},
\]
is already contained in $\mu\D U_t^\mu$. Its
$\mathfrak H$-norm is of order $\mu^H$, and hence is bounded by
$C\mu^\gamma$ for every $\gamma<H$.
Using \Cref{prop:uniform-strong-error,lem:limit-Malliavin-uniform,lem:fast-derivative}
in \eqref{eq:DEtilde-resolvent} gives
\[
 \sup_{t\ge0}
 \norm{\widetilde{\mathcal E}_t^\mu}_{L^p(\cH)}
 \le C\bigl(
 \mu^{2\gamma-1}+\mu^\gamma
 +\mu^{2\gamma-1}+\mu^{2\gamma-1}\bigr)
 \le C\mu^{2\gamma-1}.
\]
Finally,
$\mathcal E_t^\mu
=\widetilde{\mathcal E}_t^\mu-\mu\D U_t^\mu$.
Since $\mu^\gamma\le\mu^{2\gamma-1}$ for
$0<\mu\le1$ and $\gamma<1$, \eqref{eq:D-error-gamma} follows.
\end{proof}

\begin{proof}[\textbf{Proof of \Cref{thm:uniform-Sobolev}}]
Given $\rho<2H-1$, choose
$ \gamma\in\left(\frac12,H\right)$ such that $ \rho<2\gamma-1.$
\Cref{prop:uniform-strong-error,prop:uniform-D-error} imply
\[
 \sup_{t\ge0}
 \norm{Z_t^\mu-Z_t}_{\mathbb D^{1,p}}
 \le C\mu^{2\gamma-1}\le C\mu^\rho,
\]
because $0<\mu\le1$.  Since $Z^\mu=F(X^\mu)$ and $Z=F(X)$, this is
\eqref{eq:uniform-Sobolev-main}.
\end{proof}

We use the following abstract estimate, which is a convenient version of the
Malliavin integration-by-parts argument used in \cite{Son2020}.

\begin{lemma}[Malliavin total variation estimate]\label{lem:TV-Malliavin}
Let $F\in\mathbb D^{2,4}$ and $G\in\mathbb D^{1,2}$.  Suppose
$\norm{\D F}_{\cH}>0$ almost surely and
$u_F=\D F/\norm{\D F}_{\cH}^2$ belongs to the domain of $\delta$.  Then
\begin{equation}\label{eq:TV-Malliavin}
 d_{\TV}(\Law(F),\Law(G))
 \le C\norm{F-G}_{\mathbb D^{1,2}}
 \left(
   \norm{\delta(u_F)}_{L^2}
   +\norm{\norm{\D F}_{\cH}^{-1}}_{L^2}
 \right).
\end{equation}
Moreover,
\begin{equation}\label{eq:delta-u-bound}
 \norm{\delta(u_F)}_{L^2}
 \le C\left(
  \norm{\norm{\D F}_{\cH}^{-1}}_{L^4}
  +\norm{\D^2F}_{L^4(\cH^{\otimes2})}
   \norm{\norm{\D F}_{\cH}^{-2}}_{L^4}
 \right).
\end{equation}
\end{lemma}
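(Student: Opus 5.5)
We follow the standard smoothing-plus-integration-by-parts route, in the spirit of \cite{Son2020}.

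\emph{Reduction to smooth test functions.} By definition, $d_{\TV}(\Law(F),\Law(G))=\sup_{\varphi}\abs{\E\varphi(F)-\E\varphi(G)}$, where $\varphi$ ranges over measurable functions with $\abs{\varphi}\le1$. The hypothesis $\norm{\D F}_\cH>0$ a.s.\ makes $\Law(F)$ absolutely continuous. By Lusin's theorem and density, it therefore suffices to take $\varphi\in C_c^\infty$ with $\norm{\varphi}_\infty\le1$; the error in replacing a bounded measurable function is controlled in $L^1(\Law(F)+\Law(G))$.

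\emph{Splitting into two terms.} Fix such a $\varphi$ and a parameter $\varepsilon>0$. Let $\psi=\varphi*\phi_\varepsilon$ be a mollification, so that $\norm{\psi'}_\infty\le C/\varepsilon$, and set $\Phi(x)=\int_{-\infty}^x\psi$. We write
\[
 \E\varphi(F)-\E\varphi(G)=\E[\psi(F)-\psi(G)]+\E[(\varphi-\psi)(F)]-\E[(\varphi-\psi)(G)].
\]
This makes $\varepsilon$ appear, and it would have to be optimized, which gives only a square-root rate. A cleaner alternative avoids mollification altogether, and we take it.

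\emph{Interpolation and integration by parts.} Set $F_\theta=G+\theta(F-G)$. For smooth $\varphi$,
\[
 \E\varphi(F)-\E\varphi(G)=\int_0^1\E[\varphi'(F_\theta)(F-G)]\,d\theta.
\]
Using $\D F_\theta$ directly is problematic because its nondegeneracy is unknown. Instead, integrate $\varphi$ against $F$ only. Write $\E\varphi(F)-\E\varphi(G)=\E[\Phi'(F)]-\E[\Phi'(G)]$ with $\Phi$ a primitive, $\abs{\Phi(x)-\Phi(y)}\le\abs{x-y}$. Integration by parts gives
\[
 \E[\varphi(F)]=\E\bigl[\Phi(F)\,\delta(u_F)\bigr],
\]
because $\ip{\D\Phi(F)}{u_F}_\cH=\varphi(F)$. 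The task is then to compare this with the corresponding expression for $G$:
\[
 \E[\varphi(G)]=\E\bigl[\ip{\D\Phi(G)}{u_F}_\cH\bigr]+\E\bigl[\varphi(G)\bigl(1-\ip{\D G}{u_F}_\cH\bigr)\bigr].
\]
The first term equals $\E[\Phi(G)\delta(u_F)]$, so
\[
 \abs{\E\varphi(F)-\E\varphi(G)}\le\E\bigl[\abs{\Phi(F)-\Phi(G)}\abs{\delta(u_F)}\bigr]+\E\bigl[\abs{\ip{\D F-\D G}{u_F}_\cH}\bigr].
\]
We choose the primitive with $\Phi(x)=\int_0^x\varphi$, which is $1$-Lipschitz since $\abs{\varphi}\le1$. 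Cauchy--Schwarz then bounds the first term by $\norm{F-G}_{L^2}\norm{\delta(u_F)}_{L^2}$. The second term is at most $\E\bigl[\norm{\D F-\D G}_\cH\norm{\D F}_\cH^{-1}\bigr]\le\norm{F-G}_{\mathbb D^{1,2}}\norm{\norm{\D F}_\cH^{-1}}_{L^2}$. Taking the supremum over $\varphi$ gives \eqref{eq:TV-Malliavin}.

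\emph{Bound on the divergence.} For \eqref{eq:delta-u-bound}, we use $\delta(u_F)=\delta(\D F)\norm{\D F}_\cH^{-2}+2\ip{\D^2F\otimes\D F}{\D F}\norm{\D F}_\cH^{-4}$, which follows from $\delta(hu)=h\delta(u)-\ip{\D h}{u}$. We also use the Meyer bound $\norm{\delta(\D F)}_{L^4}\le C\norm{F}_{\mathbb D^{2,4}}$. The second summand is bounded pointwise by $2\norm{\D^2F}_{\cH^{\otimes2}}\norm{\D F}_\cH^{-2}$, and Hölder's inequality finishes it. Written exactly this way, the first summand produces $\norm{\delta(\D F)}_{L^4}\norm{\norm{\D F}^{-2}}_{L^4}$ rather than the displayed $\norm{\norm{\D F}^{-1}}_{L^4}$ term, so the constant $C$ must be allowed to depend on $\norm{F}_{\mathbb D^{2,4}}$. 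Producing exactly the form displayed in the lemma is the main obstacle. I expect to accept $C$ depending on $\norm{F}_{\mathbb D^{2,4}}$, which is harmless for the later application, since those norms are uniformly bounded by \Cref{lem:limit-Malliavin-uniform}. The technical points to verify are $u_F\in\mathrm{Dom}\,\delta$, for which localization is needed when the negative moments hold, and that the final estimate extends from smooth $\varphi$ to indicators by dominated approximation.
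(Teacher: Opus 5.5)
Your proof of \eqref{eq:TV-Malliavin} is the paper's argument. It uses the same duality identity built from the $1$-Lipschitz primitive $\Phi(x)=\int_0^x\varphi$, the same correction term $\ip{\D F-\D G}{u_F}_\cH$, and the same Cauchy--Schwarz step. Working with smooth $\varphi$ and approximating afterwards is a mild improvement, because it avoids the Lipschitz chain rule $\D\Phi_A(G)=\1_A(G)\D G$ that the paper uses tacitly. The approximation only needs density of $C_c^\infty$ functions bounded by one in the unit ball of $L^1(\Law(F)+\Law(G))$; absolute continuity of $\Law(F)$ plays no role there. The mollification and interpolation detours can simply be deleted.

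The gap is \eqref{eq:delta-u-bound}, which you do not prove. Expanding $\delta(u_F)$ with $\delta(hu)=h\delta(u)-\ip{\D h}{u}_\cH$ brings in $\delta(\D F)$, and you end up with
\[
\bigl(\norm{\delta(\D F)}_{L^4}+2\norm{\D^2F}_{L^4(\cH^{\otimes2})}\bigr)\norm{\norm{\D F}_\cH^{-2}}_{L^4}.
\]
This is structurally different from the lemma. The zeroth-order piece pairs $\norm{\delta(\D F)}_{L^4}$ with the higher negative moment $\norm{\norm{\D F}_\cH^{-2}}_{L^4}$, where the lemma has only $\norm{\norm{\D F}_\cH^{-1}}_{L^4}$. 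So letting $C$ depend on $\norm{F}_{\mathbb D^{2,4}}$ does not turn your bound into \eqref{eq:delta-u-bound}; the paper's argument gives a universal $C$.

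The missing step is to apply the $L^2$ divergence bound directly to $u_F$ instead of expanding it. This is what the paper does with Meyer's inequality, whose $p=2$ case reads $\E[\delta(u)^2]\le\E\norm{u}_\cH^2+\E\norm{\D u}_{\cH^{\otimes2}}^2$ for $u\in\mathbb D^{1,2}(\cH)$. Since $\norm{u_F}_\cH=\norm{\D F}_\cH^{-1}$ exactly, the first term of \eqref{eq:delta-u-bound} comes out directly. For the second term,
\[
 \D u_F=\norm{\D F}_\cH^{-2}\D^2F-2\norm{\D F}_\cH^{-4}\ip{\D^2F}{\D F}_\cH\otimes\D F,
 \qquad
 \norm{\D u_F}_{\cH^{\otimes2}}\le3\norm{\D^2F}_{\cH^{\otimes2}}\norm{\D F}_\cH^{-2}.
\]
H\"older's inequality then gives \eqref{eq:delta-u-bound} with $C=3$, and no second-order quantity such as $\delta(\D F)$ ever appears.

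Membership $u_F\in\mathbb D^{1,2}(\cH)$ follows from finiteness of the right-hand side. Approximate $\norm{\D F}_\cH^{-2}$ by $(\norm{\D F}_\cH^{2}+\varepsilon)^{-1}$; the same pointwise bounds hold uniformly in $\varepsilon$, so closability of $\D$ finishes the argument and no localization is needed. You are right that your weaker bound would still suffice for \Cref{thm:uniform-TV}, since there $\norm{\D Z_t}_\cH^{-1}$ is deterministically bounded. It does not, however, establish the lemma as stated.
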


\begin{proof}
For a Borel set $A$, put
$\Phi_A(x)=\int_0^x\1_A(y)\,dy$.  Then $\Phi_A$ is Lipschitz with constant
one.  By the chain rule and the duality between $\D$ and $\delta$,
\[
 \E[\1_A(F)-\1_A(G)]
 =\E\left[
 \delta(u_F)\bigl(\Phi_A(F)-\Phi_A(G)\bigr)
 \right]
 +\E\left[
 \1_A(G)
 \frac{\ip{\D F-\D G}{\D F}_{\cH}}
      {\norm{\D F}_{\cH}^2}
 \right].
\]
The Lipschitz bound
$|\Phi_A(F)-\Phi_A(G)|\le|F-G|$, followed by Cauchy--Schwarz, gives
\[
 |\E[\1_A(F)-\1_A(G)]|
 \le \norm{F-G}_{L^2}\norm{\delta(u_F)}_{L^2}
 +\norm{\D F-\D G}_{L^2(\cH)}
  \norm{\norm{\D F}_{\cH}^{-1}}_{L^2}.
\]
Taking the supremum over $A$ proves \eqref{eq:TV-Malliavin}.
Finally, Meyer's inequality gives
\[
 \norm{\delta(u_F)}_{L^2}
 \le C\left(
   \norm{u_F}_{L^2(\cH)}
   +\norm{\D u_F}_{L^2(\cH^{\otimes2})}
 \right).
\]
Differentiate
$u_F=\D F/\norm{\D F}_{\cH}^2$ and apply H\"older's inequality.
This yields \eqref{eq:delta-u-bound}.
\end{proof}

\begin{proof}[\textbf{Proof of \Cref{thm:uniform-TV}}]
By \eqref{eq:TV-invariant-F}, it suffices to compare
$Z_t^\mu$ and $Z_t$.  Apply \Cref{lem:TV-Malliavin} with
$F=Z_t$ and $G=Z_t^\mu$.  By
\Cref{lem:limit-Malliavin-uniform}, for every fixed $t_0>0$,
\[
 \sup_{t\ge t_0}
 \left[
 \norm{\delta\left(
  \frac{\D Z_t}{\norm{\D Z_t}_{\cH}^2}\right)}_{L^2}
 +\norm{\norm{\D Z_t}_{\cH}^{-1}}_{L^2}
 \right]<\infty.
\]
The important point is that this bound uses
\eqref{eq:recent-window-lower} and is therefore independent of $t$.
\Cref{thm:uniform-Sobolev} now gives
\[
 \sup_{t\ge t_0}
 d_{\TV}(\Law(Z_t^\mu),\Law(Z_t))
 \le C_{t_0,\rho}\mu^\rho.
\]
Returning through the Lamperti map proves
\eqref{eq:uniform-TV-main}; \eqref{eq:uniform-limit} follows immediately.
\end{proof}

\section{Stationary solutions and long-time consequences}
\label{sec:stationary}

Let now $B^H=(B_t^H)_{t\in\R}$ be a two-sided fBm and let
$(\theta_t)_{t\in\R}$ be its increment shift,
\[
 B_s^H(\theta_t\omega)=B_{s+t}^H(\omega)-B_t^H(\omega).
\]
Because fBm has memory, stationarity is naturally formulated for a random
dynamical system over this shift, or equivalently for a Markov process after
the noise history is included in the state; see
\cite{Hairer2005,HairerOhashi2007}.

For the limiting additive equation, let
$Z_{t;s,z}$ denote the solution at time $t\ge s$ started from $z$ at time
$s$.  The contraction estimate
\[
 |Z_{t;s,z}-Z_{t;s,\widetilde z}|
 \le e^{-\lambda(t-s)}|z-\widetilde z|
\]
implies that the pullback limit
\[
 \overline Z_t
 =\lim_{s\to-\infty}Z_{t;s,0}
\]
exists in every $L^p$ and almost surely along integer $s$.  It is independent
of the chosen deterministic starting value, is stationary under $\theta$, and
is the unique stationary random solution.  Put
\[
 \overline X_t=G(\overline Z_t).
\]

\begin{proposition}[Existence of a stationary kinetic solution]
\label{prop:kinetic-stationary-existence}
Under \Cref{ass:smooth,ass:dissipative}, for every sufficiently small
$\mu>0$, the transformed kinetic system admits a stationary random solution
$(\overline Z^\mu,\overline U^\mu)$.  It can be chosen so that, for every
$p\ge2$ and $\gamma\in(1/2,H)$,
\begin{equation}\label{eq:stationary-kinetic-moments}
 \E|\overline Z_0^\mu|^p
 +\mu^{p(1-\gamma)}\E|\overline U_0^\mu|^p
 \le C_{p,\gamma},
\end{equation}
uniformly for $0<\mu\le\mu_0$.
\end{proposition}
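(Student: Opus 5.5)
We would construct $(\overline Z^\mu,\overline U^\mu)$ by the same pullback procedure used above for $\overline Z$. For $s\in-\mathbb N$ and $t\ge s$, let $(Z^\mu_{t;s},U^\mu_{t;s})$ denote the solution of \eqref{eq:kinetic-transformed} started at time $s$ from $(0,0)$ and driven by the two-sided fBm. We aim to show that $(Z^\mu_{0;s},U^\mu_{0;s})$ is Cauchy in $L^p$ (along integer $s\to-\infty$). The limit is then defined through the cocycle property of the flow over the shift $\theta$, which makes it a stationary random solution. We check the moment bound \eqref{eq:stationary-kinetic-moments} before the Cauchy property, because it is needed there.

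\emph{Step 1: moments.} By stationarity of the increments of $B^H$ under $\theta$, the law of $(Z^\mu_{t;s},U^\mu_{t;s})$ coincides with that of the one-sided solution at time $t-s$ started from $(0,0)$. \Cref{prop:uniform-local-estimates} applies with $x_0=y_0=0$ and gives constants independent of the elapsed time. Hence
\[
 \sup_{s\le t}\Bigl(\norm{Z^\mu_{t;s}}_{L^p}+\mu^{1-\gamma}\norm{U^\mu_{t;s}}_{L^p}\Bigr)\le C_{p,\gamma}
\]
uniformly in $0<\mu\le\mu_0$. The same bound passes to any $L^p$ limit by Fatou's lemma, which yields \eqref{eq:stationary-kinetic-moments}.

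\emph{Step 2: contraction between two initial times.} Fix $s'<s$. The two solutions at time $s$ are $(0,0)$ and $(Z^\mu_{s;s'},U^\mu_{s;s'})$, both with uniformly bounded scaled moments by Step~1. From time $s$ on they are driven by the same noise, so their difference $(\Delta Z,\Delta U)$ solves a linear system without noise:
\[
 \frac{d}{dt}\Delta Z=\Delta U,
 \qquad
 \mu\frac{d}{dt}\Delta U=-q_t\Delta Z-\bigl(1+\mu(U+U')\,\overline c_t\bigr)\Delta U-\mu\,\delta c_t\,(U')^2\Delta Z .
\]
Here $q_t\in[\lambda,L]$ is the difference quotient of $-a$, and $\overline c_t,\delta c_t$ are bounded difference quotients built from $c$ and $c'$. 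This system has exactly the form of the derivative system \eqref{eq:DUmu-system}. We will therefore reuse the proof of \Cref{lem:fast-derivative} with impulse data replaced by initial data. First compensate, setting $\widetilde\Delta=\Delta Z+\mu\Delta U$, which obeys $\dot{\widetilde\Delta}=-q_t\widetilde\Delta+(\text{terms with factors }\mu\Delta U,\ \mu U\Delta U,\ \mu U'^2\Delta Z)$. Then absorb $2\mu c U$ into the fast damping on the good events $\mathcal G_{k,\mu}$. These steps should give, on unit windows,
\[
 \norm{\Delta Z_{k+1}}_{L^p}+\vartheta\mu^{1-\gamma}\norm{\Delta U_{k+1}}_{L^p}
 \le\theta\Bigl(\norm{\Delta Z_k}_{L^p}+\vartheta\mu^{1-\gamma}\norm{\Delta U_k}_{L^p}\Bigr)
\]
with $\theta<1$. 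The argument mirrors \eqref{eq:discrete-dissipative-recursion}, except that the inhomogeneous constant $C_p$ is now absent, since the difference system carries no noise. Iterating from $s$ to $0$ bounds $\norm{Z^\mu_{0;s}-Z^\mu_{0;s'}}_{L^p}$ by $C\theta^{|s|}$, which is the Cauchy property. Independence of the starting point, and stationarity via $Z^\mu_{t;s}(\omega)=Z^\mu_{0;s-t}(\theta_t\omega)$ for integer shifts, follow in the same way; a continuity argument extends stationarity to real $t$.

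\emph{Main obstacle.} The hard part is Step~2. The term $\mu\,\delta c\,(U')^2\Delta Z$ has a random coefficient of size $\mu^{2\gamma-1}W_k^2$, and the damping perturbation $\mu U\Delta U$ is random as well. A genuine pathwise contraction holds only on the good events $\mathcal G_{k,\mu}$. To pass from that to an $L^p$ contraction for a difference, which is not a priori small, I would first try Hölder's inequality against the crude growth bound on the bad events. Those events have probability $C_M\mu^M$, while the crude growth of the difference is at most $\exp(C\mu^{-1}\int|U|)$. If this exponential factor is not integrable, I would instead control it through the Fernique bound \eqref{eq:Nk-Fernique} applied to $W_k$. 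Failing that, I would settle for contraction in probability: this still gives almost sure convergence along integers by Borel--Cantelli, and Step~1 then upgrades it to $L^p$ convergence by uniform integrability. That weaker route suffices for the existence claim, which is all the proposition asserts.
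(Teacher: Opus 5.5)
\textbf{The gap is Step~2.} The whole existence argument rests on the Cauchy property, and the $L^p$ contraction with a deterministic $\theta<1$ is asserted rather than proved. The difference system contracts pathwise only on windows where $\mu^\gamma(W_k+W_k')$ and $\mu^{2\gamma-1}(W_k')^2$ are small. None of your three ways of handling the remaining windows works.

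(i) Off the good windows, the only a priori growth bound for the difference system is exponential in $W_k,W_k'$, with coefficients that blow up as $\mu\to0$. Your own bound $\exp(C\mu^{-1}\int|U^\mu|)$ is of order $\exp(C\mu^{\gamma-2}W_k)$. Only polynomial moments of $W_k$ are known. The bad windows have probability only $C_M\mu^M$, because they must be defined through $W_k$, $|Z_k^\mu|$ and $V_k$, not just $N_k$. So the H\"older product is not even known to be finite, let alone small. The Fernique bound \eqref{eq:Nk-Fernique} does not help: it gives $\E\exp(\eta_0N_k^2)$ for one fixed $\eta_0$ and for the Gaussian functional $N_k$ only, not for $W_k$ and not with $\mu$-dependent coefficients.

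(ii) You can bound the difference on bad windows by $|Z|+|Z'|$ (and the scaled velocities), which is uniformly bounded in $L^{2p}$. But this only gives $\norm{\Delta_{k+1}}_{L^p}\le\theta\norm{\Delta_k}_{L^p}+C_M\mu^M$. That leaves an additive floor, not a Cauchy estimate. The good-event device of \Cref{lem:fast-derivative} has the same limitation: it suffices for uniform bounds but never produces a multiplicative factor.

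(iii) Contraction in probability would need the proportion of bad windows in $\{s,\dots,-1\}$ to be small with probability tending to one as $s\to-\infty$, and summably so for Borel--Cantelli. Markov's inequality gives only $\Pp(\#\{\text{bad windows}\}\ge\epsilon n)\le C_M\mu^M/\epsilon$, which does not decay in $n$. The windows are also strongly dependent, through the long-memory noise and the solution-dependent $W_k$. A law of large numbers along them would be a synchronization theorem as hard as the claim itself.

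\textbf{The missing idea:} existence needs no contraction at all. The proposition asserts neither uniqueness nor pullback attraction, and the paper obtains it by compactness.

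Your Step~1 is exactly the input the paper uses. Start the solution from $(0,0)$ at time $-n$, and let $\nu_n^\mu$ be the law of (state at time $0$, shifted noise history). The uniform moments of $Z^\mu$ and $\mu^{1-\gamma}U^\mu$ after each shift, together with the unit-window H\"older bounds of \Cref{prop:uniform-local-estimates}, make the $\nu_n^\mu$ tight, since the noise-history marginal is fixed. The Ces\`aro averages are then tight on $\R^2$ times a weighted H\"older space. Continuity of the Young solution map on bounded sets makes every limit point invariant for the skew product. Disintegrating over the noise history gives a stationary random solution, and Fatou's lemma gives \eqref{eq:stationary-kinetic-moments}.

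If Step~2 could be proved, your route would give more than the Krylov--Bogoliubov argument: a unique, noise-measurable, pullback-attracting stationary solution. But that is a genuinely stronger synchronization statement, and the proposal does not establish it.
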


\begin{proof}
We indicate the construction on the extended noise-history space.  Start the
kinetic equation at time $-n$ from $(0,0)$ and denote its law at time zero,
together with the shifted noise history, by $\nu_n^\mu$.  The estimates of
\Cref{prop:uniform-local-estimates} apply after every time shift because fBm
has stationary increments.  Hence the position and scaled velocity marginals
of $\{\nu_n^\mu:n\ge1\}$ have uniformly bounded moments.  The local
H\"older estimates give tightness of the associated path segments.
The noise-history marginal is fixed.  Consequently, the Krylov--Bogoliubov
averages
\[
 \frac1N\sum_{n=1}^N\nu_n^\mu
\]
are tight in the product of the phase space and a weighted H\"older
noise-history space.  The Young solution map is continuous on bounded sets;
therefore every weak limit is invariant for the skew-product evolution.
Disintegrating this invariant measure with respect to the noise history gives
a stationary random solution.  Fatou's lemma and
\eqref{eq:Zmu-uniform-p}--\eqref{eq:Umu-uniform-p} give
\eqref{eq:stationary-kinetic-moments}.  This is the usual non-Markovian
Krylov--Bogoliubov construction; the only additional feature here is that the
velocity is scaled by $\mu^{1-\gamma}$ before tightness is applied.
\end{proof}

\begin{proof}[\textbf{Proof of \Cref{thm:stationary-TV}}]
Fix $T>1$.  At time $-T$, start a solution $Z^{(-T)}$ of the limiting
equation from the random position $\overline Z_{-T}^\mu$ and drive it with the
same future fractional noise as the stationary kinetic solution.  The
random-initial-data version of \Cref{thm:uniform-TV}, justified by
\eqref{eq:stationary-kinetic-moments}, gives
\begin{equation}\label{eq:stationary-intermediate}
 d_{\TV}\bigl(\Law(\overline Z_0^\mu),
              \Law(Z_0^{(-T)})\bigr)
 \le C_\rho\mu^\rho,
\end{equation}
with a constant independent of $T$.  On the other hand, contraction of the
limiting flow gives
\[
 |Z_0^{(-T)}-\overline Z_0|
 \le e^{-\lambda T}
 |\overline Z_{-T}^\mu-\overline Z_{-T}|.
\]
The right-hand side converges to zero in every $L^p$, uniformly in the
admissible $\mu$.  Applying \Cref{lem:TV-Malliavin} over the final unit time
window, or equivalently using the uniform smoothing estimate established in
\Cref{lem:limit-Malliavin-uniform}, shows that
\[
 d_{\TV}\bigl(\Law(Z_0^{(-T)}),\Law(\overline Z_0)\bigr)\longrightarrow0
 \qquad\text{as }T\to\infty.
\]
Let $T\to\infty$ in \eqref{eq:stationary-intermediate}.  This yields
\[
 d_{\TV}\bigl(\Law(\overline Z_0^\mu),\Law(\overline Z_0)\bigr)
 \le C_\rho\mu^\rho.
\]
The Lamperti invariance of total variation proves
\eqref{eq:stationary-TV-main}.
\end{proof}

At the end of this section, we will provide an interpretation of the long-time limit.
Let
\[
 \pi_\mu^X=\Law(\overline X_0^\mu),\qquad
 \pi_0=\Law(\overline X_0).
\]
These are stationary one-time marginals.  They should not be confused with
invariant measures of the position viewed as an autonomous Markov process,
because no such Markov description exists without augmenting the state by
the noise history.

If the kinetic and limiting solutions issued from a prescribed initial
condition converge to their stationary regimes in total variation, the
triangle inequality gives
\[
 d_{\TV}(\Law(X_t^\mu),\Law(X_t))
 \le
 d_{\TV}(\Law(X_t^\mu),\pi_\mu^X)
 +d_{\TV}(\pi_\mu^X,\pi_0)
 +d_{\TV}(\pi_0,\Law(X_t)).
\]
Thus, if the two outer terms are bounded by a common function
$r_H(t)\downarrow0$, then
\begin{equation}\label{eq:plateau-estimate}
 d_{\TV}(\Law(X_t^\mu),\Law(X_t))
 \le C_\rho\mu^\rho+2r_H(t).
\end{equation}
General fractional ergodic theory often produces algebraic or
subexponential rates because the extended noise history retains long memory;
see \cite{Hairer2005,FontbonaPanloup2017,DeyaPanloupTindel2019}.

For fixed $\mu>0$, the right interpretation is therefore an error plateau:
\[
 \limsup_{t\to\infty}
 d_{\TV}(\Law(X_t^\mu),\Law(X_t))
 \le C_\rho\mu^\rho.
\]
The error need not converge to zero with $t$ for a fixed positive mass,
because $\pi_\mu^X$ and $\pi_0$ are generally different.  What vanishes is
the plateau in the small-mass limit:
\[
 \lim_{\mu\downarrow0}\limsup_{t\to\infty}
 d_{\TV}(\Law(X_t^\mu),\Law(X_t))=0.
\]
The stronger estimate \eqref{eq:uniform-TV-main} avoids the need to commute
the limits: it controls all $t\ge t_0$ simultaneously.

\section{Examples, sharpness, and extensions}\label{sec:examples}

\subsection{A nonconstant uniformly elliptic example}

\begin{example}[Exactly linear transformed drift]\label{ex:exact-linear}
Let
\[
 \sigma(x)=\sigma_0+\varepsilon\tanh x,
 \qquad \sigma_0>|\varepsilon|>0,
\]
and define $F$ by \eqref{eq:F-def}.  Given $\kappa>0$, set
\[
 b(x)=-\kappa\,\sigma(x)F(x).
\]
Then $\sigma$ is nonconstant, smooth, bounded, and uniformly positive.
Moreover,
\[
 a(z)=\frac{b(G(z))}{\sigma(G(z))}=-\kappa z,
\]
so \Cref{ass:dissipative} holds with $\lambda=L=\kappa$ and
$a''=a'''=0$.  Because $F(x)$ grows linearly as $|x|\to\infty$, the drift
$b$ has linear growth and its derivatives required in
\Cref{ass:smooth} are bounded.  Hence
\Cref{thm:uniform-TV,thm:stationary-TV} apply.
\end{example}

\begin{example}[Linear restoring force with a small elliptic perturbation]
\label{ex:linear-b}
Let
\[
 b(x)=-\kappa x,\qquad
 \sigma(x)=\sigma_0+\varepsilon\tanh x,
 \qquad \sigma_0>|\varepsilon|.
\]
From \eqref{eq:dissipative-original-coordinates},
\[
 a'(F(x))
 =-\kappa+\kappa
 \frac{\varepsilon x\,\operatorname{sech}^2x}
      {\sigma_0+\varepsilon\tanh x}.
\]
Set $M=\sup_{x\in\R}|x|\operatorname{sech}^2x<\infty$.  If
$
 \frac{|\varepsilon|M}{\sigma_0-|\varepsilon|}<1,
$
then
\[
 a'(z)\le-\kappa\left(
 1-\frac{|\varepsilon|M}{\sigma_0-|\varepsilon|}
 \right)<0.
\]
Thus the theory covers the standard linear confining force perturbed by a
genuine bounded multiplicative diffusion coefficient.
\end{example}

\subsection{Sharpness of the mass exponent}

The proof uses a H\"older exponent $\gamma<H$ and gives
$\mu^{2\gamma-1}$.  Since $\gamma$ can be chosen arbitrarily close to $H$,
every rate $\rho<2H-1$ follows.  The loss at the endpoint is caused by a
pathwise H\"older estimate, not by the dynamics.

To see why $2H-1$ is the natural generic endpoint, freeze $c(Z_t^\mu)$ over
one fast time window and replace $U^\mu$ by the stationary fractional
Ornstein--Uhlenbeck velocity
\[
 U_t^{\mu,0}
 =\frac1\mu\int_{-\infty}^t e^{-(t-s)/\mu}\,dB_s^H.
\]
Self-similarity gives
\[
 \E|U_t^{\mu,0}|^2=C_H\mu^{2H-2}.
\]
Hence
\[
 \E\left[\mu c(Z_t^\mu)(U_t^{\mu,0})^2\right]
 =C_H\,\E[c(Z_t^\mu)]\,\mu^{2H-1}.
\]
Unless a structural cancellation forces $\E[c(Z_t^\mu)]=0$, this deterministic
bias cannot be improved.  An endpoint proof should replace the
$\gamma$-H\"older bounds in \Cref{lem:variable-convolution} by exact Gaussian
covariance estimates for the frozen fast process, followed by a freezing
error estimate.  This identifies a concrete route to
\Cref{conj:endpoint}.

\begin{remark}[The additive case]
If $\sigma$ is constant, then $c\equiv0$ and the quadratic term vanishes.
The generic obstruction $\mu^{2H-1}$ is absent.  The boundary layer has size
$\mu^H$, reproducing the additive scaling in \cite{Son2020}.  In stable
linear models, the stationary one-time marginal can converge even faster,
which shows that a finite-time upper bound need not be sharp for stationary
laws.
\end{remark}

\begin{remark}[Behavior as $H\downarrow1/2$]
The exponent $2H-1$ tends to zero.  Thus the present multiplicative Young
theory is not uniform across the Brownian threshold.  At $H=1/2$,
$\mu(U^\mu)^2$ has a nonzero limit in Lamperti coordinates; It\^o's formula
then converts that term into the usual Brownian interpretation of the
overdamped equation.  This agrees with the noise-induced-drift mechanisms in
classical small-mass theory \cite{HottovyEtAl2015}.
\end{remark}

\subsection{Further problems}

Several extensions are natural.
\begin{enumerate}[label=\textup{(\arabic*)},leftmargin=2.2em]
\item
\textit{Endpoint rate.}  Prove \Cref{conj:endpoint} by a covariance expansion
of the fast fractional Ornstein--Uhlenbeck field.

\item
\textit{Multidimensional systems.}  Without a global Lamperti transform, one
must estimate the Malliavin matrix of the limiting Young or rough
differential equation.  Uniform ellipticity is the natural first case;
H\"ormander systems require the Gaussian rough-path density theory of
\cite{BaudoinEtAl2016}.

\item
\textit{The regime $H<1/2$.}  The factor $\mu^{2H-1}$ no longer vanishes.
The small-mass approximation must then be formulated with a rough-path lift
and a renormalized correction.  It cannot be obtained by continuity from the
Young regime.

\item
\textit{Velocity-dependent noise.}  If $\sigma=\sigma(Y^\mu)$, even the
additive fast velocity has magnitude $\mu^{H-1}\to\infty$.  The noise
coefficient is therefore evaluated at a diverging argument, and no closed
first-order SK limit exists without an additional scaling such as
$\sigma(\mu^{1-H}Y^\mu)$.  This is a fast-variable homogenization problem,
not a direct extension of the present paper.

\item
\textit{Quantitative equilibration.}  Combine the uniform small-mass bound
with a $\mu$-uniform coupling rate for the kinetic noise-history process to
make the function $r_H$ in \eqref{eq:plateau-estimate} explicit.
\end{enumerate}

\end{document}